\documentclass[12pt,twoside]{amsart}
\usepackage[all]{xy}
\usepackage{graphicx}
\usepackage{amsmath}

\usepackage{longtable}

\title[Toric manifolds whose Chern characters are positive]
{Remarks on toric manifolds whose Chern characters are positive}
\author{Hiroshi Sato and Yusuke Suyama} 
\subjclass[2010]{Primary 14M25; Secondary 14C17, 14J45.}
\date{2019/12/6, version 0.12}
\keywords{toric manifolds, Chern character, $2$-Fano manifolds}
\address{Department of Applied Mathematics, Faculty of Sciences, 
Fukuoka University, 8-19-1, Nanakuma, Jonan-ku, Fukuoka 814-0180, Japan}
\email{hirosato@fukuoka-u.ac.jp}
\address{Department of Mathematics, Graduate School of 
Science, Osaka University, Toyonaka, Osaka 560-0043, Japan}
\email{y-suyama@cr.math.sci.osaka-u.ac.jp}

\makeatletter
    
    \@addtoreset{equation}{section}
\makeatother

\setlength{\topmargin}{-1cm}
\setlength{\oddsidemargin }{-1pt}
\setlength{\evensidemargin }{-1pt}
\setlength{\textwidth}{460pt}
\setlength{\textheight}{25cm}
\newcommand{\Pic}[0]{{\operatorname{Pic}}}

\newcommand{\G}[0]{{\operatorname{G}}}
\newcommand{\ch}[0]{{\operatorname{ch}}}

\newcommand{\relint}[0]{{\operatorname{Relint}}}
\newcommand{\Cone}[0]{{\operatorname{Cone}}}
\newcommand{\Hom}[0]{{\operatorname{Hom}}}
\newcommand{\V}[0]{{\operatorname{V}}}

\newcommand{\pr}[0]{{\operatorname{pr}}}

\newtheorem{thm}{Theorem}[section]
\newtheorem{lem}[thm]{Lemma}
\newtheorem{cor}[thm]{Corollary}
\newtheorem{prop}[thm]{Proposition}

\theoremstyle{definition}
\newtheorem{ex}[thm]{Example}
\newtheorem{defn}[thm]{Definition}

\newtheorem{rem}[thm]{Remark}
\newtheorem*{ack}{Acknowledgments}       

\begin{document}
\bibliographystyle{amsalpha+}

\begin{abstract}
We show that the second Chern character of any projective toric 
manifold of Picard number three is not positive. 
In connection with this result, 
we give various examples of the positivity of higher Chern characters 
of projective toric manifolds. 
\end{abstract}

\thispagestyle{empty}

\maketitle

\tableofcontents
\section{Introduction} 

For a smooth projective variety $X$ over an algebraically closed field
and a positive integer $k$,
the $k$-th Chern character $\mathrm{ch}_k(X)$ is said to be {\em positive}
(resp.\ {\em nef})
if the intersection number $(\mathrm{ch}_k(X)\cdot z)$ is positive (resp.\ non-negative)
for any $z\in\overline{\rm NE}_k(X)\setminus\{0\}$, 
where ${\rm NE}_k(X)$ is the cone of numerical effective $k$-cycles 
on $X$. 
A variety with $\mathrm{ch}_1(X)$ positive is nothing but a Fano variety.
Fano varieties with $\mathrm{ch}_2(X)$ nef are called {\em 2-Fano manifolds}
and they are first studied by de Jong--Starr \cite{ds}
in order to investigate the existence of rational surfaces on a Fano variety.
Fano $d$-folds with $\mathrm{ch}_2(X)$ positive and index $\geq d-2$
are classified in \cite{ac}.

In this paper, we focus on the toric case.
A {\em toric manifold} is a smooth complete toric variety.
For $d\ge 2$,
the $d$-dimensional projective space $\mathbb{P}^d$ is a toric manifold
and one can easily see that the second Chern character $\ch_2(\mathbb{P}^d)$ is positive.
On the other hand,
we do not know toric manifolds whose second Chern character 
is positive other than projective spaces 
at the present moment. Therefore, 
we study projective toric manifolds of Picard number three in order to 
find a new example of such toric manifolds. However, on the contrary, 
we show the following:
\begin{thm}[Theorem \ref{mainthm}]\label{introthm}
The second Chern character of 
any projective toric manifold of Picard number three which does not have 
a Fano contraction is not {\em nef}. 
\end{thm}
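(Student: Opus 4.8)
The plan is to reduce the statement to a combinatorial inequality about the fan and then to verify it using Batyrev's classification of toric manifolds of Picard number three. Let $X=X_\Sigma$ be a toric manifold of dimension $d$ with torus-invariant prime divisors $D_1,\dots,D_n$; the generalized Euler sequence gives
\[
\ch_2(X)=\frac12\sum_{i=1}^n D_i^2 .
\]
For $\tau\in\Sigma$ of codimension two, the torus-invariant surface $\V(\tau)$ represents a nonzero class of $\overline{\NE}_2(X)$, so to prove that $\ch_2(X)$ is not nef it suffices to produce one such cone $\tau$ with $(\ch_2(X)\cdot\V(\tau))<0$. Write $S:=\V(\tau)$, a smooth complete toric surface, let $\rho_1,\dots,\rho_{d-2}$ be the rays of $\tau$, and let $r$ be the number of maximal cones of $\Sigma$ containing $\tau$, which is also the number of rays of the fan of $S$. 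Since $S=D_{\rho_1}\cap\cdots\cap D_{\rho_{d-2}}$ is a transverse intersection of smooth divisors, $N_{S/X}=\bigoplus_{j=1}^{d-2}\mathcal O_X(D_{\rho_j})|_S$; using additivity of the Chern character on $0\to TS\to TX|_S\to N_{S/X}\to 0$ together with the identities $K_S^2=12-r$ and $\int_S c_2(S)=r$ valid on a smooth complete toric surface, one obtains
\[
(\ch_2(X)\cdot\V(\tau))=\frac12\Bigl(12-3r+\sum_{j=1}^{d-2}\bigl(D_{\rho_j}^2\cdot\V(\tau)\bigr)\Bigr),
\]
where each $D_{\rho_j}^2\cdot\V(\tau)=(D_{\rho_j}|_S)^2$ is a self-intersection number on the surface $S$. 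The goal is thus to choose $\tau$ so that $\sum_{j}(D_{\rho_j}|_S)^2<3r-12$.

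Next I would invoke Batyrev's classification of smooth complete toric varieties of Picard number three, according to which the fan $\Sigma$ has exactly three or exactly five primitive collections, and in either case its rays fall into groups whose primitive relations are prescribed up to finitely many integer parameters. The hypothesis that $X$ has no Fano contraction must be translated into this language: the existence of a Fano contraction (a fiber-type elementary contraction) of a toric manifold is equivalent to the existence of a primitive relation $\sum_{v\in P}v=\sum_k a_k u_k$ whose right-hand side vanishes, so our hypothesis forces $\sum_k a_k u_k\neq 0$ for \emph{every} primitive relation of $\Sigma$. In Batyrev's parametrization this excludes the ``split'' configurations in the three-primitive-collection family and restricts the admissible parameters in the five-primitive-collection family.

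For each surviving configuration I would choose the codimension-two cone $\tau$ carefully — for instance as the cone generated by a suitable subset of the rays occurring with positive coefficient in one of the primitive relations, chosen so that $S=\V(\tau)$ has as few rays as possible, ideally $r=3$ or $r=4$ — and then read off the self-intersection numbers $(D_{\rho_j}|_S)^2$ from the wall-crossing relations of the maximal cones of $\Sigma$ through $\tau$, which are themselves recorded in the primitive relations. When $r=4$ the required inequality just says that the sum of the self-intersections of the normal-bundle summands of $S$ in $X$ is negative, which is possible on a Hirzebruch surface although not on $\mathbb P^2$; when $r=3$ it says this sum is at most $2$. I expect the main obstacle to be the bookkeeping in this case analysis: one must check that the chosen cone $\tau$ really belongs to $\Sigma$ for every member of the relevant family, and that $12-3r+\sum_j(D_{\rho_j}|_S)^2$ comes out negative for all admissible parameter values — this is precisely where the positivity forced by the absence of a Fano contraction enters, and it is likely that the three- and five-primitive-collection cases need different choices of $\tau$ and separate verifications.
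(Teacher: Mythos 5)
Your reduction to torus-invariant surfaces and your formula $(\ch_2(X)\cdot\V(\tau))=\tfrac12\bigl(12-3r+\sum_{j}(D_{\rho_j}|_S)^2\bigr)$ are correct and close in spirit to what the paper does (it uses the analogous formula from \cite{sato2} when $\rho(S)=2$). The genuine gap is in your translation of the hypothesis: having a Fano contraction is \emph{not} equivalent to having a primitive relation with vanishing right-hand side; it is equivalent to having such a relation whose class spans an \emph{extremal} ray of $\NE(X)$. Indeed, Batyrev's description of the Picard-number-three manifolds with no Fano contraction (Theorem \ref{batpic3}) always contains the primitive relation $z_1+\cdots+z_{p_2}+t_1+\cdots+t_{p_3}=0$, whose right-hand side is zero. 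So your constraint ``every primitive relation has nonzero right-hand side'' would exclude the five-primitive-collection family entirely (as well as the three-collection family), i.e.\ exactly the varieties the theorem is about, and your case analysis would be vacuous. The hypothesis has to enter through Theorem \ref{batpic3} itself (no Fano contraction forces the five-collection description with parameters $p_0,\ldots,p_4$, $b_i$, $c_j$), not through a pointwise condition on the relations.

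Beyond this, what you have is a plan rather than a proof: the entire substance of the argument is the deferred case analysis, and your expectation that one well-chosen cone $\tau$ with $r=3$ or $r=4$ rays will give a negative intersection number uniformly in the parameters is not borne out. In the paper no single surface works: one needs the surface $S_1$ with $\rho(S_1)=3$ (five rays) to get $b_1\ge 1$, $p_2>p_3$ and, later, $b_1\ge 2c_2+p_1+p_4+1$; a $\mathbb{P}^1\times\mathbb{P}^1$ to force $p_3=1$; and a Hirzebruch surface $F_{c_2}$ to force $p_2=2$ and $c_2\ge 2(b_1+1)$. The contradiction comes from playing the inequalities from two different surfaces against each other, so which cycle is ``negative'' depends on the parameters. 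To complete your argument you would need to redo this kind of multi-surface analysis over the correct parameter family from Theorem \ref{batpic3}, and your general formula, while valid, does not by itself shortcut that bookkeeping.
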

Moreover, by combining Theorem \ref{introthm} with 
the known result in \cite{sato3} (see Theorem \ref{extremalsato}), we obtain the following:
\begin{cor}[Corollary \ref{maincor}]
Let $X$ be a smooth projective toric $d$-fold of Picard number at most three. If 
the second Chern character $\ch_2(X)$ of $X$ is {\em positive}, then $X$ is 
isomorphic to the $d$-dimensional projective space $\mathbb{P}^d$. 
\end{cor}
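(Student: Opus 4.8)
The plan is to argue by a short case analysis on the Picard number $\rho(X)\in\{1,2,3\}$, reducing everything to Theorem~\ref{introthm} together with the structural description of toric manifolds with a Fano contraction recorded in Theorem~\ref{extremalsato}. Since ``positive'' implies ``nef'' and $\mathbb{P}^d$ has Picard number one, it suffices to show that $\ch_2(X)$ cannot be positive when $\rho(X)\in\{2,3\}$, while the case $\rho(X)=1$ is exactly the conclusion $X\cong\mathbb{P}^d$.

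First, if $\rho(X)=1$ then $X$ is a smooth complete toric variety whose fan has exactly $d+1$ rays, and such a fan is necessarily the fan of $\mathbb{P}^d$; hence $X\cong\mathbb{P}^d$, which indeed has $\ch_2$ positive as observed in the introduction. Next, suppose $X$ admits a Fano contraction $\varphi\colon X\to Y$. If $\dim Y=0$ then the extremal ray contracted by $\varphi$ contains the class of every curve on $X$, so $\rho(X)=1$ and we are back in the previous case. If $\dim Y\ge 1$ then Theorem~\ref{extremalsato} shows that $\ch_2(X)$ is not positive, contradicting the hypothesis. Thus no toric manifold carrying a Fano contraction onto a positive-dimensional base has positive second Chern character.

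Now suppose $\rho(X)=2$. By Kleinschmidt's classification $X$ is the projectivization of a split vector bundle over $\mathbb{P}^s$ for some $s\ge 1$, and the bundle projection $X\to\mathbb{P}^s$ is an extremal contraction of fiber type, i.e.\ a Fano contraction onto a positive-dimensional base; so $\ch_2(X)$ is not positive by the previous paragraph. Finally, suppose $\rho(X)=3$ and that $\ch_2(X)$ is positive, hence nef. Then Theorem~\ref{introthm} forces $X$ to carry a Fano contraction $\varphi\colon X\to Y$, and since $\rho(X)=3\ne 1$ we have $\dim Y\ge 1$; the previous paragraph now yields a contradiction. Hence, under the hypotheses, $\rho(X)=1$ and $X\cong\mathbb{P}^d$.

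The only substantial ingredient is Theorem~\ref{introthm}, the Picard-number-three case without a Fano contraction, whose proof is the heart of the paper; everything else above is bookkeeping. The point requiring care is that ``$X$ has a Fano contraction'' must be used in exactly the same sense---an extremal contraction of fiber type---both in Theorem~\ref{introthm} and in Theorem~\ref{extremalsato}, and that Theorem~\ref{extremalsato} genuinely applies to \emph{every} toric manifold with a fiber-type extremal contraction onto a positive-dimensional base; in particular it must apply to all projectivized split bundles over projective spaces, since that is what makes the Picard-number-two case go through.
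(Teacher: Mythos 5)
Your proof is correct and follows essentially the same route as the paper, which simply combines Theorem \ref{extremalsato} with Theorem \ref{mainthm}: a Fano contraction plus positivity forces $X\cong\mathbb{P}^d$, while the $\rho(X)=3$, no-Fano-contraction case is excluded by the non-nefness result. Your Kleinschmidt argument for $\rho(X)=2$ merely re-derives the ``in particular'' clause already contained in Theorem \ref{extremalsato}, so no new ingredient is involved.
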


Also, we deal with the positivity of higher Chern characters of projective toric manifolds. 
In contrast to the second Chern character,
we show that there exists a projective toric manifold with $\ch_k(X)$ positive
other than the projective space for each $k \geq 3$. 
We also give some phenomena which do not appear 
in the case of the positivity of the second Chern character. 

\medskip

This paper is organized as follows: In Section \ref{junbi}, we collect 
basic results in the toric geometry. We define the positivity 
of Chern characters for a projective toric manifold. Section \ref{pic3calc} 
is devoted to the calculation of intersection numbers on 
projective toric manifolds of Picard number three which admit no 
Fano contraction. As a result, we can show that the second Chern characters 
of such manifolds are not nef. 
In Section \ref{highercase}, we introduce various phenomena about 
the positivity of higher Chern characters. 

\begin{ack}
The authors would like to thank Professor Osamu Fujino for his supports.
The first author was partially supported by JSPS KAKENHI 
Grant Number JP18K03262. 
The second author was partially supported 
by JSPS KAKENHI 
Grant Number JP18J00022. 
\end{ack}

\section{Preliminaries}\label{junbi} 

In this section, we introduce the fundamental notation and concepts in the toric geometry. 
For details, please see 
\cite{cls}, \cite{fulton} and \cite{oda} for the toric geometry. Also see 
\cite{fujino-sato}, \cite{matsuki} and \cite{reid} for 
the toric Mori theory. 
We will work over an algebraically closed field $K=\overline{K}$. 

Let $N:=\mathbb{Z}^d$, $M:=\Hom_{\mathbb{Z}}(N,\mathbb{Z})$, 
$N_{\mathbb{R}}:=N\otimes_{\mathbb{Z}}\mathbb{R}$ and 
$M_{\mathbb{R}}:=M\otimes_{\mathbb{Z}}\mathbb{R}$. 
$\Cone(S)$ 
stands for the cone generated by $S\subset N_{\mathbb{R}}$. 
For a fan $\Sigma$ in $N$, we denote by $X=X_\Sigma$ 
the toric $d$-fold associated to $\Sigma$. 
We denote by $T=T_N$ the algebraic torus for $N$.  
For a smooth complete fan $\Sigma$, put 
\[
\G(\Sigma):=\left\{\mbox{the primitive generators of 
$1$-dimensional cones in $\Sigma$}\right\}\subset N.
\]
For a cone $\sigma\in\Sigma$, 
we put $\G(\sigma):=\sigma\cap\G(\Sigma)$, that is, the set of 
primitive generators of 
$\sigma$. 
For any $x\in\G(\Sigma)$, there is the corresponding torus invariant 
divisor $D_x$ on $X$.
It is well-known that there exists the following isomorphism
of abelian groups:
\[
{\mathrm A}_1(X)\cong 
\left\{(a_x)_{x\in\G(\Sigma)}\in \mathbb{Z}^{\G(\Sigma)}\,\left|\,
\sum_{x\in\G(\Sigma)}a_xx=0\right.\right\},
\]
where ${\mathrm A}_1(X)$ is the group of numerical $1$-cycles on $X$. 
Thus, we can regard a linear relation among elements in $\G(\Sigma)$ as 
a numerical $1$-cycle on $X$. 
For a $(d-1)$-dimensional cone $\tau\in\Sigma$, there exists a linear relation 
\[
y_1+y_2+a_1x_1+\cdots+a_{d-1}x_{d-1}=0,
\]
where $\G(\tau)=\{x_1,\ldots,x_{d-1}\}$, $a_1,\ldots,a_{d-1}\in\mathbb{Z}$ and 
for $\{y_1,y_2\}\subset\G(\Sigma)$, 
$\G(\tau)\cup\{y_1\}$ and $\G(\tau)\cup\{y_2\}$ generate 
maximal cones in $\Sigma$. 
This linear relation corresponds to the torus invariant curve $C_{\tau}$ 
associated to $\tau$. We remark that 
\[
2+a_1+\cdots+a_{d-1}=(-K_X\cdot C_{\tau}).
\]
The following relation is convenient to describe a complete smooth fan:
\begin{defn}[{\cite[Definitions 2.6, 2.7 and 2.8]{bat1}}]\label{pcpc}
Let $X=X_\Sigma$ be a toric manifold. 
We call a nonempty subset $P\subset \G(\Sigma)$ a {\em primitive collection} in 
$\Sigma$ if $P$ does not generate a cone in $\Sigma$, while 
any proper subset of $P$ generates a cone in $\Sigma$. 

For a primitive collection $P=\{x_1,\ldots,x_r\}$, there exists the unique cone 
$\sigma(P)\in\Sigma$ such that $x_1+\cdots+x_r$ is contained in the 
relative interior of $\sigma(P)$. Put $\G(\sigma)=
\{y_1,\ldots,y_s\}$. Then, we have a linear relation
\[
x_1+\cdots+x_r=a_1y_1+\cdots+a_sy_s\ (a_1,\ldots,a_s\in\mathbb{Z}_{>0}).
\]
We call this relation the {\em primitive relation} of $P$. 
Thus, by the above argument, 
we obtain the numerical $1$-cycle $r(P)\in{\mathrm A}_1(X)$ for any 
primitive collection $P\subset\G(\Sigma)$. 
We remark that 
\[
r-(a_1+\cdots+a_s)=\left(-K_X\cdot r(P)
\right).
\]
\end{defn}

We can characterize toric Fano manifolds using the notion of primitive relations 
as follows:

\begin{thm}[{\cite[Proposition 2.3.6]{bat2}}]\label{Fano}
Let $X=X_\Sigma$ be a toric manifold.
Then $X$ is Fano if and only if $\left(-K_X\cdot r(P)\right)>0$
for any primitive collection $P$ in $\Sigma$.
\end{thm}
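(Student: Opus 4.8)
The plan is to translate the Fano condition into the classical ampleness (strict convexity) criterion for the anticanonical divisor, and then to read off both implications from the combinatorics of primitive relations. Since $X$ is a toric manifold, $-K_X=\sum_{x\in\G(\Sigma)}D_x$ is a Cartier divisor, so $X$ is Fano exactly when $-K_X$ is ample; by Kleiman's criterion, together with the standard fact that $\overline{\NE}(X)$ is a polyhedral cone spanned by the classes $[C_\tau]$ of the torus invariant curves, this is equivalent to $(-K_X\cdot z)>0$ for every $z$ in some set of generators of $\overline{\NE}(X)\setminus\{0\}$. I will also use the formula $(-K_X\cdot r(P))=r-(a_1+\cdots+a_s)$ recorded above for a primitive collection $P$ of cardinality $r$, together with the standard ampleness criterion for $T$-invariant divisors (see \cite{cls}) in the following form: $-K_X$ is ample if and only if, for every maximal cone $\sigma\in\Sigma$, writing $\G(\sigma)=\{z_1,\dots,z_d\}$ (a $\mathbb{Z}$-basis of $N$) and any ray $x\in\G(\Sigma)\setminus\G(\sigma)$ as $x=\lambda_1z_1+\cdots+\lambda_dz_d$, one has $\lambda_1+\cdots+\lambda_d\le 0$.

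For the implication $(\Rightarrow)$ I would argue directly from this criterion. Let $P=\{x_1,\dots,x_r\}$ be a primitive collection with primitive relation $x_1+\cdots+x_r=a_1y_1+\cdots+a_sy_s$, where $\G(\sigma(P))=\{y_1,\dots,y_s\}$, and choose a maximal cone $\sigma\in\Sigma$ having $\sigma(P)$ as a face, so that $\{y_1,\dots,y_s\}\subseteq\G(\sigma)=\{z_1,\dots,z_d\}$. Let $\ell\colon N_{\mathbb{R}}\to\mathbb{R}$ be the linear functional determined by $\ell(z_i)=1$ for all $i$. Then $\ell$ takes the value $1$ on every element of $\G(\sigma)$, while, since $X$ is Fano and $-K_X$ is ample, the criterion above gives $\ell(w)\le 0$ for every ray $w\in\G(\Sigma)\setminus\G(\sigma)$. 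Applying $\ell$ to the primitive relation yields $\sum_{k=1}^{r}\ell(x_k)=\sum_{j=1}^{s}a_j\ell(y_j)=a_1+\cdots+a_s$. Because $P$ generates no cone of $\Sigma$ it is not contained in $\G(\sigma)$, so at least one $x_k$ lies outside $\G(\sigma)$ and thus has $\ell(x_k)\le 0$; since in addition $\ell(x_k)\le 1$ for every $k$, we obtain $a_1+\cdots+a_s\le r-1$, that is, $(-K_X\cdot r(P))=r-(a_1+\cdots+a_s)\ge 1>0$.

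For the implication $(\Leftarrow)$ the cleanest route is to invoke the structural fact (a result of Batyrev and Reid; cf.\ \cite{reid}) that each class $r(P)$ lies in $\overline{\NE}(X)$ and that the classes $r(P)$ generate $\overline{\NE}(X)$ as a cone --- equivalently, that each wall class $[C_\tau]$ is a non-negative integral combination of primitive relation classes. Granting this, if $(-K_X\cdot r(P))>0$ for every primitive collection $P$, then $-K_X$ is positive on a set of generators of $\overline{\NE}(X)\setminus\{0\}$, hence ample, so $X$ is Fano; and the same fact re-proves $(\Rightarrow)$ at once, since $r(P)\in\overline{\NE}(X)\setminus\{0\}$. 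I expect the proof of this structural fact to be the main obstacle, and I would carry it out by a wall crossing induction. Starting from a wall relation $y_1+y_2+\sum_{i}a_ix_i=0$ with $(-K_X\cdot C_\tau)=2+\sum_{i}a_i\le 0$, one first notes that necessarily some $a_i<0$; if $\{y_1,y_2\}$ fails to generate a cone of $\Sigma$ it is a primitive collection, and in favorable cases --- for instance when all $a_i\le 0$, in which case $\sigma(\{y_1,y_2\})=\Cone(x_i : a_i<0)$ --- one reads off $(-K_X\cdot r(\{y_1,y_2\}))=2+\sum_{i}a_i=(-K_X\cdot C_\tau)\le 0$ immediately. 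In general one must subtract a suitable primitive relation from the wall relation and re-express the remainder through wall classes whose ``negative part'' has strictly decreased. The delicate point, which is where the argument has its real content, is that the primitive relation of a naively chosen primitive subcollection of $\{y_1,y_2\}\cup\{x_i : a_i<0\}$ need not be supported on a single maximal cone; to make the descent well founded one therefore has to choose the induction parameter (for example the quantity $\sum_{a_i<0}(-a_i)$) with care.
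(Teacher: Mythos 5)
The paper itself does not prove this statement---it is quoted verbatim from Batyrev \cite{bat2}---so your attempt has to be judged on its own rather than against an in-paper argument. Your forward implication is correct and complete: the criterion ``$-K_X$ is ample iff for every maximal cone $\sigma$ and every $x\in\G(\Sigma)\setminus\G(\sigma)$, the coordinate sum of $x$ in the basis $\G(\sigma)$ is $\le 0$'' is exactly strict convexity of the anticanonical support function (integrality of the coordinates converts the strict inequality $<1$ into $\le 0$), and evaluating your functional $\ell$ on the primitive relation, together with the observation that $P\not\subseteq\G(\sigma)$ because every subset of $\G(\sigma)$ spans a cone of $\Sigma$, gives $a_1+\cdots+a_s\le r-1$, hence $(-K_X\cdot r(P))\ge 1$.

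The genuine gap is in the converse. The ``structural fact'' you invoke---that the classes $r(P)$ lie in and generate $\overline{\NE}(X)$, equivalently that every wall class $[C_\tau]$ is a non-negative combination of primitive classes---is not a soft background fact: it is Batyrev's theorem on the Mori cone of a smooth projective toric variety (whose original proof needed repair, see Casagrande and Cox--von Renesse), and it is essentially equivalent in strength to the criterion you are asked to prove; quoting it reduces your proof to a restatement of the cited result. Your attempt to establish it by wall-crossing descent is precisely where the content lies, and as written it does not close: after subtracting a primitive relation from a wall relation the remainder is in general not again a wall relation, the cone $\sigma(P)$ of the chosen primitive collection need not be compatible with the maximal cones adjacent to the wall, and you concede that you have not produced a well-founded decreasing parameter, so the induction is incomplete exactly at its crucial step. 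There is also a hypothesis mismatch worth flagging: the theorem concerns smooth \emph{complete} toric varieties, whereas Kleiman's criterion and the Mori-cone generation theorem you appeal to are statements about projective varieties; for $(\Rightarrow)$ this is harmless (Fano forces projectivity), but for $(\Leftarrow)$ you must either restrict to projective $X$ or use the toric Kleiman criterion for complete toric varieties (ampleness is equivalent to positivity on all torus-invariant curves, see \cite{cls}) and then still carry out, in full, the reduction from wall classes to primitive classes that your sketch leaves open.
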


In this paper, we study the positivity of the $k$-th Chern character $\ch_k(X)$ for 
a smooth projective toric $d$-fold $X=X_\Sigma$. 
Let $D_1,\ldots,D_n$ be the torus invariant prime divisors on $X$. Then, 
$\ch_k(X)$ is described as 
\[
\ch_k(X)=\frac{1}{k!}\left(D_1^k+\cdots+D_n^k\right). 
\] 
The following proposition is fundamental for our theory. 
The proof is completely similar to the one for
\cite[Proposition 2.26]{oda} in the Japanese version. 
We describe a sketch of the proof for the reader's convenience. 
\begin{prop}\label{torusinv}
Let $X=X_\Sigma$ be a toric manifold of $\dim X=d$ and $1\le k\le d$. For any $k$-dimensional 
irreducible closed subvariety $Y\subset X$, there exist torus invariant 
$k$-dimensional irreducible closed subvarieties $Y_1,\ldots,Y_l$ such that 
$Y$ is numerically equivalent to 
\[
a_1Y_1+\cdots+a_lY_l
\]
for positive integers $a_1,\ldots,a_l$. 
\end{prop}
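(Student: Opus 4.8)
The plan is to degenerate $Y$ to a torus-invariant cycle by acting on it with a sufficiently general one-parameter subgroup of $T$ and passing to the flat limit. Recall that one-parameter subgroups $\mathbb{G}_m\to T=T_N$ are identified with the elements of $N$, and subtori of $T$ with the saturated sublattices of $N$; for $v\in N$ write $\lambda_v\colon\mathbb{G}_m\to T$ for the associated one-parameter subgroup. Given an irreducible closed subvariety $Z\subseteq X$ of dimension $k$ and a primitive $v\in N$, I would form
\[
\mathcal{Z}_v:=\overline{\{(\lambda_v(t)\cdot z,\,t)\,:\,z\in Z,\ t\in\mathbb{G}_m\}}\subseteq X\times\mathbb{A}^1 .
\]
Since $\mathbb{G}_m\times Z$ is irreducible, so is $\mathcal{Z}_v$, and as it dominates the smooth curve $\mathbb{A}^1$ the projection $\mathcal{Z}_v\to\mathbb{A}^1$ is flat; it is proper because $X$ is complete. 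Its fiber over $t\ne 0$ is $\lambda_v(t)\cdot Z$, in particular the fiber over $t=1$ is $Z$ itself, while the fiber $Z_0$ over $0$ is the flat limit $\lim_{t\to0}\lambda_v(t)\cdot Z$.

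Two consequences are immediate. First, $Z_0$ is numerically equivalent to $Z$: any two fibers of a flat proper family over a connected base are algebraically equivalent, hence numerically equivalent, and both $Z$ and $Z_0$ occur as fibers of $\mathcal{Z}_v\to\mathbb{A}^1$. Second, because $\mathcal{Z}_v$ is integral and flat over $\mathbb{A}^1$, the special fiber $Z_0$ is nonempty and pure of dimension $k$; writing $Z_0=a_1Z_1+\cdots+a_lZ_l$ with $Z_1,\dots,Z_l$ its distinct irreducible components and $a_i>0$ the multiplicities, the positivity of the coefficients is automatic. This is exactly why one passes to the flat limit rather than to an arbitrary representative of the numerical class, and it shows that the entire content of the proposition is to arrange that the components $Z_i$ be torus-invariant.

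The main obstacle is that, a priori, $Z_0$ is only invariant under $\lambda_v(\mathbb{G}_m)$ — this follows from the standard fact that $\mathcal{Z}_v$ carries a $\mathbb{G}_m$-action lying over the scaling action on $\mathbb{A}^1$ and acting on $X$ through $\lambda_v$, and this action fixes the central fiber — which is strictly weaker than $T$-invariance when $d>1$. I would remove this obstacle by iteration, exploiting the commutativity of $T$. The key step is: if an irreducible $Z$ as above is fixed by a subtorus $H\subseteq T$ with corresponding saturated sublattice $N_H\subsetneq N$ and $v\in N\setminus N_H$ is primitive, then every irreducible component of $Z_0=\lim_{t\to0}\lambda_v(t)\cdot Z$ is fixed by a strictly larger subtorus. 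Indeed, for $h\in H$ one has $h\cdot(\lambda_v(t)\cdot z)=\lambda_v(t)\cdot(h\cdot z)$ since $T$ is abelian, so $h\cdot\mathcal{Z}_v=\mathcal{Z}_v$ and hence $h\cdot Z_0=Z_0$; combined with the $\lambda_v(\mathbb{G}_m)$-invariance of $Z_0$, this shows that $Z_0$, and therefore each of its components, is fixed by the connected subtorus $H'$ whose cocharacter lattice is the saturation of $N_H+\mathbb{Z}v$, which strictly contains $H$. Applying this repeatedly — starting from $Z=Y$ with $H=\{1\}$ and at each round treating the finitely many components of the effective cycle produced so far — the dimension of the fixing subtorus strictly increases and is bounded by $d$, so after at most $d$ rounds every component of the resulting effective cycle, which remains numerically equivalent to $Y$ throughout, is $T$-invariant. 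These components are the required $Y_1,\dots,Y_l$.

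The remaining points are routine: that $\mathcal{Z}_v\to\mathbb{A}^1$ is flat (its total space is integral and dominates a smooth curve), that $Z_0$ is pure of dimension $k$ (flatness together with irreducibility of the generic fiber), and the bookkeeping of multiplicities, for which nothing is needed beyond the fact that the scheme-theoretic flat limit of a variety is an effective cycle. One could alternatively dispense with the iteration by taking a single very general $v$ and checking chart by chart that $Z_0\cap U_\sigma$ is defined by the initial ideal of $Z\cap U_\sigma$ with respect to $v$, which is a monomial — hence torus-invariant — ideal when $v$ lies in the interior of a maximal cone of the relevant Gr\"obner fan; but I expect the iteration argument to be the cleaner one to write out in full. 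In either approach the only genuine difficulty is upgrading $\lambda_v(\mathbb{G}_m)$-invariance of the limit to $T$-invariance; numerical equivalence and the positivity of the coefficients come for free from the flat-limit construction.
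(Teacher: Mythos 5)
Your proof is correct, but it achieves torus-invariance of the limit by a genuinely different mechanism than the paper. Both arguments degenerate the given subvariety along a one-parameter subgroup and take the flat limit over $\mathbb{A}^1$, so the points you call routine (purity of dimension $k$, positivity of the multiplicities, and algebraic --- hence numerical --- equivalence of the fibers; the paper even gets rational equivalence, since the base is rational) are handled identically. The difference is in the choice of cocharacter and in how one upgrades invariance. The paper does everything in a single degeneration: it lets $\V(\sigma)$ be the smallest torus-invariant subvariety containing $Y$, chooses a lattice point $n$ in the relative interior of a $(d-k)$-dimensional cone $\tau$ having $\sigma$ as a face, and invokes the toric description of limits of one-parameter subgroups (\cite[Proposition 3.2.2]{cls}, \cite[Proposition 1.6 (v)]{oda}) to conclude that the special fiber is supported precisely on the orbit closures $\V(\tau_1),\ldots,\V(\tau_l)$ of $(d-k)$-dimensional cones containing $\sigma$; this buys full $T$-invariance in one shot together with an explicit list of the possible limit components. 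You instead take an arbitrary primitive $v$ and compensate by iteration: your key lemma --- that if an irreducible $Z$ is stabilized by a subtorus $H$ and $v\notin N_H$, then the limit cycle is stabilized by $H$ (commutativity) and by $\lambda_v(\mathbb{G}_m)$ (equivariance of the family), hence each component is preserved by the strictly larger connected subtorus, connectedness ensuring components are not permuted --- is sound, and since the rank of the stabilizing sublattice strictly increases, at most $d$ rounds applied componentwise yield a cycle with $T$-invariant components, which are then orbit closures of the correct dimension. Your route needs no toric-specific limit computation, only the flat-limit formalism plus commutativity of $T$, at the cost of losing the explicit identification of the components and of the extra bookkeeping over several degenerations; your alternative Gr\"obner/initial-ideal remark, with a single sufficiently general $v$, is the variant closest in spirit to the paper's one-step argument.
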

\begin{proof}[Sketch of the proof]
Let $\V(\sigma)$ be the smallest torus invariant irreducible closed subvariety 
associated to a cone $\sigma\in\Sigma$ 
which contains $Y$. Obviously, $\dim \sigma\le d-k$. So, there exists a 
$(d-k)$-dimensional cone $\tau\in\Sigma$ such that 
$\sigma$ is a face of $\tau$.  
For $n\in\left(\relint (\tau)\right)\cap N$, where $\relint (\tau)$ 
stands for the relative interior of $\tau$, 
let $\gamma_n:K^{\times}\to T$ be the corresponding one-parameter subgroup. 
For a morphism 
\[
\begin{array}{cccc}
\Phi: & \V(\sigma)\times K^{\times} & \to & \V(\sigma)\times K^{\times}, \\
&\rotatebox{90}{$\in$} & & \rotatebox{90}{$\in$} \\
&(u,\lambda) & \mapsto & (\gamma_n(\lambda)u,\lambda)
\end{array}
\]
let $Z:=\overline{\Phi\left(Y\times K^{\times}\right)}$ be the closure of 
$\Phi\left(Y\times K^{\times}\right)$ in 
$\V(\sigma)\times K$ and $\pr_2:Z\to K$ the second projection. Then, 
there exist $(d-k)$-dimensional cones $\tau_1,\ldots,\tau_l$ which contain $\sigma$ 
as a face such that 
\[
\pr_2^{-1}(1)=Y,\mbox{ while }\pr_2^{-1}(0)=\V(\tau_1)\cup\cdots\cup\V(\tau_l)
\]
as sets, where $\V(\tau_1),\ldots,\V(\tau_l)$ are the 
torus invariant irreducible closed subvarieties 
associated to $\tau_1,\ldots,\tau_l$, respectively 
(see \cite[Proposition 3.2.2]{cls} and \cite[Proposition 1.6 (v)]{oda}). 
Therefore, $Y$ is rationally equivalent to 
\[
a_1\V(\tau_1)+\cdots+a_l\V(\tau_l)
\]
for positive integers $a_1,\ldots,a_l$. 
In particular, they are numerically equivalent. 
\end{proof}
Proposition \ref{torusinv} tells us that for a toric manifold $X$, 
we may define the positivity (resp. non-negativity) of $\ch_k(X)$ as follows:
\begin{defn}\label{positivedef}
Let $X$ be a toric manifold of $\dim X=d$. For $1\le k\le d$, we say
$\ch_k(X)$ is {\em positive} (resp. {\em nef}) if $(\ch_k(X)\cdot Y)>0$ (resp. $\ge 0$) 
for any 
$k$-dimensional closed torus invariant subvariety $Y\subset X$. 
When $\ch_k(X)$ is positive (resp. nef), we say $X$ is $\ch_k$-{\em positive} (resp. $\ch_k$-{\em nef}). 
\end{defn}

\begin{rem}
When $k=1$, we have $\ch_1(X)=\mathrm{c}_1(X)=-K_X$. So, a $\ch_1$-positive (resp. $\ch_1$-nef) 
toric manifold is nothing but a 
toric Fano (resp. weak Fano) manifold. 
\end{rem}

It is not difficult to see that the $d$-dimensional projective space $\mathbb{P}^d$ is 
$\ch_k$-positive for $1\le k\le d$. Moreover, for $k=2$, the following holds:
\begin{thm}[{\cite[Theorem 1.3]{sato3}}]\label{extremalsato}
Let $X=X_\Sigma$ be a smooth projective toric $d$-fold. 
If $X=X_\Sigma$ is $\ch_2$-positive and 
$X$ has a Fano contraction, then $X\cong\mathbb{P}^d$. 
In particular, if $\rho(X)=2$, then $X$ is not $\ch_2$-positive. 
\end{thm}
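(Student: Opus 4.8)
The plan is to prove the contrapositive of the first assertion: if $X=X_\Sigma$ is a smooth projective toric $d$-fold admitting a Fano contraction and $X\not\cong\mathbb{P}^d$, then $X$ is not $\ch_2$-positive, because there is a torus-invariant surface $S\subset X$ with $(\ch_2(X)\cdot S)\le 0$. Granting this, the final sentence is automatic: a toric manifold of Picard number one is $\mathbb{P}^d$, so $\rho(X)=2$ already forces $X\not\cong\mathbb{P}^d$, while by Kleinschmidt's classification a smooth projective toric variety of Picard number two is a projectivized direct sum of line bundles over a projective space, and the bundle projection onto that projective space is a Fano contraction.

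First I would fix the combinatorics. By toric Mori theory the Fano contraction is the contraction $\varphi\colon X\to Y$ of an extremal ray $R$, and $R=\mathbb{R}_{\ge 0}\, r(P)$ for a primitive collection $P=\{x_1,\dots,x_r\}$ whose primitive relation is $x_1+\cdots+x_r=0$ --- the right-hand side being $0$ precisely because the contraction is of fiber type --- with $\dim Y=d-r+1$ and general fiber $\mathbb{P}^{r-1}$. If $\dim Y=0$ then $\rho(X)=1$, so $X\cong\mathbb{P}^d$ and we are done; hence we may assume $\dim Y\ge 1$, i.e.\ $r\le d$, and then $X\not\cong\mathbb{P}^d$ is automatic.

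Next I would produce the surface. Since $\dim Y\ge 1$, one can choose a $(d-2)$-dimensional cone $\tau\in\Sigma$ --- built from $\{x_2,\dots,x_{r-1}\}$ together with suitable further primitive generators lying in a maximal cone through $x_1$ --- so that $S:=\V(\tau)$ is a smooth complete toric surface whose image $\varphi(\V(\tau))$ is a torus-invariant curve in $Y$ and which carries a torus-invariant curve $C$ with $[C]=r(P)\in R$ appearing as a fiber of a ruling $S\to\mathbb{P}^1$ refining $\varphi$; in particular $S$ is a Hirzebruch surface $\mathbb{F}_a$ (allowing $\mathbb{P}^1\times\mathbb{P}^1$) and is not contained in a fiber of $\varphi$. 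Writing $\ch_2(X)=\tfrac12\sum_{x\in\G(\Sigma)}D_x^2$ and restricting to $S$, the only divisors $D_x$ that do not restrict to $0$ are the four boundary divisors of $S$ and the $D_u$ with $u\in\G(\tau)$; since the self-intersection numbers of the boundary divisors of any Hirzebruch surface sum to $0$ (equivalently $\deg\ch_2(T_S)=0$), this collapses to
\[
(\ch_2(X)\cdot S)\;=\;\tfrac12\sum_{u\in\G(\tau)}\bigl(D_u|_S\bigr)^2,
\]
where each $D_u|_S$ is the self-intersection (normal bundle) class of the divisor $\V(\tau)$ inside the smooth complete toric threefold $\V(\tau_u)$, $\tau_u$ being the facet of $\tau$ spanned by $\G(\tau)\setminus\{u\}$; thus it is read off from the link of $\tau$ in $\Sigma$.

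The step I expect to be the main obstacle is the inequality $\sum_{u\in\G(\tau)}(D_u|_S)^2\le 0$. This is delicate: the classes $D_u|_S$ need not be effective, and already a twisted projective bundle over a curve exhibits individual summands of positive square, so one must use the primitive relation $x_1+\cdots+x_r=0$ together with the smoothness of $\Sigma$ to control how the rays of $\Sigma$ neighboring $\tau$ project to the fan of $S\cong\mathbb{F}_a$, and in particular to force the most \emph{twisted} direction of $\varphi$ to contribute a class of sufficiently negative square. Two further points need care: (i) that a suitable $\tau$ exists at all --- one must in particular avoid the case $S=\mathbb{P}^2$, for which $(\ch_2(X)\cdot S)>0$ always --- which rests on the structure theory of toric Fano contractions of smooth toric varieties, namely their equidimensionality and the shape of their fibers (see \cite{fujino-sato}, \cite{reid}); and (ii) running the resulting combinatorial estimate uniformly in $r$ and $a$. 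Note that $(\ch_2(X)\cdot S)=0$ genuinely occurs --- for instance $S=\mathbb{F}_a$ itself when $d=2$, or $S=\mathbb{P}^1\times\mathbb{P}^1$ inside $\mathbb{P}^1\times\mathbb{P}^2$ --- which is exactly why the conclusion is the failure of $\ch_2$-positivity, not of $\ch_2$-nefness.
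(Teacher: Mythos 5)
Two remarks. First, this statement is not proved in the paper at all: it is imported verbatim from \cite{sato3} (Sato, \emph{Toric 2-Fano manifolds and extremal contractions}), so the only meaningful comparison is with that reference, whose argument does indeed go the way you indicate --- restrict $\ch_2(X)$ to torus-invariant surfaces attached to the fiber-type contraction and compute via the surface-restriction formula quoted in this paper as Proposition \ref{polyinter}, using the fact (Reid, Fujino--Sato) that a toric Fano contraction of a toric manifold is a projective-space bundle over a smooth toric variety. So your overall route is the intended one.

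Second, as a proof your proposal has a genuine gap, and you name it yourself: the inequality $\sum_{u\in\G(\tau)}\bigl(D_u|_S\bigr)^2\le 0$ is precisely the whole content of the theorem, and you do not prove it --- you only observe that it is ``delicate'' and that individual summands can be positive. Nothing in the write-up controls the sum; the phrase about the ``most twisted direction'' contributing a sufficiently negative square is a hope, not an argument, and it cannot even be formulated until the cone $\tau$ is pinned down, which you also leave unspecified (``suitable further primitive generators''). Relatedly, the existence of a $(d-2)$-cone $\tau$ with $\rho(\V(\tau))=2$ (so that the Hirzebruch reduction and the four-ray bookkeeping apply, and $\V(\tau)\not\cong\mathbb{P}^2$), containing a fiber line of the contraction, is exactly where the $\mathbb{P}^{r-1}$-bundle structure of the contraction must be invoked and a concrete choice made --- e.g.\ a surface lying over an invariant curve of $Y$ chosen to see the extremal twisting of the bundle; for a generic choice the sum can vanish or the estimate can fail to be decisive. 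So what you have is a correct reduction (the restriction of $2\ch_2(X)$ to $S$ picks up only $\G(\tau)$ and the four link rays, whose boundary self-intersections on $F_a$ sum to $0$) plus a statement of the key quantitative step, but the key step itself --- the heart of \cite{sato3} --- is missing.
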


\section{Toric manifolds of Picard number three}\label{pic3calc} 

In this section, we determine whether a projective toric manifold $X$ of 
$\rho(X)=3$ is $\ch_2$-positive or not. By Theorem \ref{extremalsato}, 
we may assume $X$ has no Fano contraction. 
We use the notation of \cite{bat1} throughout this section:
 
\begin{thm}[{\cite[Theorem 6.6]{bat1}}]\label{batpic3}
Let $X=X_\Sigma$ be a projective toric manifold of Picard number three. 
If $X$ has no Fano contraction, then the fan $\Sigma$ is 
explicitly described as follows:
Let 
\[
\G(\Sigma)=\{
v_1,\ldots,v_{p_0},
y_1,\ldots,y_{p_1},
z_1,\ldots,z_{p_2},
t_1,\ldots,t_{p_3},
u_1,\ldots,u_{p_4}
\}, 
\]
where $p_0,p_1,p_2,p_3,p_4$ are positive integers and 
$p_0+p_1+p_2+p_3+p_4-3=d:=\dim X$. Then, the primitive relations of $\Sigma$ are 
\[
v_1+\cdots+v_{p_0}+y_{1}+\cdots+y_{p_1}=c_2z_2+\cdots+c_{p_2}z_{p_2}+(b_1+1)t_1+\cdots+(b_{p_3}+1)t_{p_3},
\]
\[
y_1+\cdots+y_{p_1}+z_1+\cdots+z_{p_2}=u_1+\cdots+u_{p_4},
\]
\[
z_1+\cdots+z_{p_2}+t_1+\cdots+t_{p_3}=0,
\]
\[
t_1+\cdots+t_{p_3}+u_1+\cdots+u_{p_4}=y_{1}+\cdots+y_{p_1}\mbox{ and}
\]
\[
u_1+\cdots+u_{p_4}+v_1+\cdots+v_{p_0}=c_2z_2+\cdots+c_{p_2}z_{p_2}+b_1t_1+\cdots+b_{p_3}t_{p_3}
\]
for $b_1,\ldots,b_{p_3},c_2,\ldots,c_{p_2}\in\mathbb{Z}_{\ge 0}$. 
\end{thm}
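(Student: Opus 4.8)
The plan is to follow Batyrev's reconstruction principle: a smooth complete toric variety is determined by $\G(\Sigma)$ together with its set of primitive collections, equivalently its primitive relations, so the problem becomes a purely combinatorial classification of the possible primitive-collection patterns when $\rho(X)=3$. Since smooth complete toric varieties of Picard number at most three are automatically projective (Kleinschmidt--Sturmfels), I may work with complete fans throughout. The basic numerical input is that the classes $r(P)$ of the primitive collections span $\mathrm{A}_1(X)\otimes\mathbb{Q}\cong\mathbb{Q}^3$, so if there are $m$ primitive collections they satisfy an $(m-3)$-dimensional space of linear relations.

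First I would show that $m$ is either three or five. It is always at least $\rho(X)=3$. For the upper bound one studies how two primitive collections overlap: if $P_1\cap P_2\neq\emptyset$, then because every proper subset of a primitive collection spans a cone and no primitive collection contains another, the elements of $P_1\cup P_2$ and of the supports of their primitive relations are tightly constrained, and in certain configurations a further primitive collection can be extracted from $(P_1\cup P_2)\setminus(P_1\cap P_2)$. A case analysis of these overlaps, bounded by the fact that the $r(P)$'s lie in the rank-three lattice $\mathrm{A}_1(X)$, shows that $m\in\{3,5\}$. When $m=3$ the fan is a \emph{splitting fan}, so $X$ is an iterated projective bundle $\mathbb{P}(\mathcal{E}_2)\to\mathbb{P}(\mathcal{E}_1)\to\mathbb{P}^{m_0}$ built from split bundles with all three fiber dimensions positive; each bundle projection is a fiber-type, hence Fano, contraction, so this case is excluded by hypothesis. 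Therefore there are exactly five primitive collections $P_1,\dots,P_5$.

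Next I would determine the overlap pattern of $P_1,\dots,P_5$. Using the same combinatorial constraints together with the two-dimensional space of relations among $r(P_1),\dots,r(P_5)$, one shows that after relabeling the five collections sit in a cyclic ``pentagon'': $\G(\Sigma)$ decomposes as a disjoint union of five nonempty groups $V,Y,Z,T,U$, and the primitive collections are exactly $V\cup Y$, $Y\cup Z$, $Z\cup T$, $T\cup U$, $U\cup V$. With this pattern fixed, the right-hand side of each primitive relation $\sum_{x\in P_i}x=\sum_y a_yy$ is supported on the cone $\sigma(P_i)$, which limits which groups may appear; imposing the two independent relations among the $r(P_i)$ and the unimodularity of the maximal cones then forces the displayed normal form of the five relations, with nonnegative integer coefficients $b_1,\dots,b_{p_3},c_2,\dots,c_{p_2}$. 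Conversely one checks directly that any such choice of data defines a smooth complete fan.

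The main obstacle is this rigidity step: proving that five primitive collections can only be arranged in the pentagon pattern and that the relation coefficients are then forced into exactly the stated shape. Bounding the number of primitive collections---ruling out four, and six or more---is where most of the combinatorial work lies; once the pentagon is in place, extracting the precise relations is a finite linear-algebra computation driven by the positivity of primitive-relation coefficients and smoothness.
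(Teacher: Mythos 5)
The paper does not prove this statement at all: Theorem \ref{batpic3} is imported verbatim from Batyrev \cite[Theorem 6.6]{bat1}, so there is no in-paper argument to compare against. Your outline is, in substance, a summary of Batyrev's own route: reduce to the combinatorics of primitive collections, show that for $\rho(X)=3$ the number of primitive collections is $3$ or $5$, identify the $3$-collection case with splitting fans (iterated projectivizations of split bundles, which carry fiber-type, hence Fano, contractions and are therefore excluded by the hypothesis), and in the $5$-collection case establish the cyclic ``pentagon'' partition of $\G(\Sigma)$ and the normal form of the five relations. The translation of the hypothesis is handled in the direction you actually need (splitting fan $\Rightarrow$ Fano contraction), and the projectivity remark via Kleinschmidt--Sturmfels is harmless.

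As a standalone proof, however, there is a genuine gap: every step that carries the actual content of the theorem is asserted rather than proved. Ruling out four and six-or-more primitive collections, proving that five collections can only overlap in the cyclic pattern $V\cup Y,\,Y\cup Z,\,Z\cup T,\,T\cup U,\,U\cup V$, and extracting the precise coefficient pattern --- why one may normalize so that $z_1$ does not appear on any right-hand side, why the $t_i$-coefficients in the first and fifth relations differ by exactly $1$, and why all of $b_1,\ldots,b_{p_3},c_2,\ldots,c_{p_2}$ may be taken in $\mathbb{Z}_{\ge 0}$ --- are exactly the delicate combinatorial arguments occupying the core of Batyrev's paper, and your phrases ``a case analysis \dots shows,'' ``one shows,'' and ``then forces'' stand in for all of them. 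So what you have is a faithful roadmap of the cited proof, not an independent verification; to close it you would either have to reproduce Batyrev's combinatorics of primitive collections in detail or simply cite \cite[Theorem 6.6]{bat1}, which is what the paper does.
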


We may assume $c_2=\min\{c_2,\ldots,c_{p_2}\}$ and 
$b_1=\min\{b_1,\ldots,b_{p_3}\}$.

Let 
\[
V_1,\ldots,V_{p_0},
Y_1,\ldots,Y_{p_1},
Z_1,\ldots,Z_{p_2},
T_1,\ldots,T_{p_3},
U_1,\ldots,U_{p_4}
\]
be the torus invariant divisors in $\Pic(X)$ associated to 
\[
v_1,\ldots,v_{p_0},
y_1,\ldots,y_{p_1},
z_1,\ldots,z_{p_2},
t_1,\ldots,t_{p_3},
u_1,\ldots,u_{p_4},
\] 
respectively. 
By calculating the rational functions for the dual basis of 
$\G(\Sigma)\setminus\{v_1,z_1,u_1\}$, we have the relations
\[
V_2-V_1=0,\ldots,V_{p_0}-V_1=0,
\]
\[
Y_1+U_1-V_1=0,\ldots,Y_{p_1}+U_1-V_1=0,
\]
\[
Z_2-Z_1+c_2V_1=0,\ldots,Z_{p_2}-Z_1+c_{p_2}V_1=0,
\]
\[
T_1-Z_1-U_1+(b_1+1)V_1=0,\ldots,T_{p_3}-Z_1-U_1+(b_{p_3}+1)V_1=0,
\]
\[
U_2-U_1=0,\ldots,U_{p_4}-U_1=0
\]
in $\Pic(X)$. In particular, we obtain
\[
V_1=\cdots=V_{p_0},\ Y_1=\cdots =Y_{p_1},\ 
U_1=\cdots=U_{p_4},\ T_i=T_1+(b_1-b_i)V_1\ (2\le i\le p_3).
\]
On the other hand, the primitive collections of $\Sigma$ tell us that 
\[
V_1\cdots V_{p_0}\cdot Y_1\cdots Y_{p_1}=0,\ 
Y_1\cdots Y_{p_1}\cdot Z_1\cdots Z_{p_2}=0,\ 
Z_1\cdots Z_{p_2}\cdot T_1\cdots T_{p_3}=0,
\]
\[
T_1\cdots T_{p_3}\cdot U_1\cdots U_{p_4}=0\mbox{ and }
U_1\cdots U_{p_4}\cdot V_1\cdots V_{p_0}=0.
\]
Finally, we should remark that 
\[
2\ch_2(X)=V^2_1+\cdots+V^2_{p_0}+
Y^2_1+\cdots+Y^2_{p_1}+
Z^2_1+\cdots+Z^2_{p_2}+
T^2_1+\cdots+T^2_{p_3}+
U^2_1+\cdots+U^2_{p_4}
\]
for our calculation below. 

\medskip


\medskip

First of all, we calculate $(\ch_2(X)\cdot S_1)$ for 
the torus invariant subsurface $S_1$ associated to 
the $(d-2)$-dimensional cone $\tau\in\Sigma$ such that 
\underline{$\G(\tau)=\G(\Sigma)\setminus
\{v_{1},y_{1},z_{1},t_{1},u_{1}\}$}. In this case, 
$\rho(S_1)=3$. 
Since 
\[
S_1=V_2\cdots V_{p_0}\cdot Y_2\cdots Y_{p_1}\cdot Z_2\cdots Z_{p_2}
\cdot T_2\cdots T_{p_3}\cdot U_2\cdots V_{p_4},
\]
we have 
\[
V_1\cdot Y_1\cdot S_1=Y_1\cdot Z_1\cdot S_1=Z_1\cdot 
T_1\cdot S_1=T_1\cdot U_1\cdot S_1=U_1\cdot V_1\cdot S_1=0.
\]
We can calculate the intersection numbers as follows:
\[
(V_1^2\cdot S_1)=\left(V_1\cdot (Y_1+U_1)\cdot S_1\right)=0,
\]
\[
(Y_1^2\cdot S_1)=\left(Y_1\cdot (V_1-U_1)\cdot S_1\right)=-1,
\]
\[
(U_1^2\cdot S_1)=\left(U_1\cdot (V_1-Y_1)\cdot S_1\right)=-1,
\]
\[
(Z_1^2\cdot S_1)=\left(Z_1\cdot (T_1-U_1+(b_1+1)V_1)\cdot S_1\right)=-1+b_1+1=b_1,
\]
\[
(Z_2^2\cdot S_1)=\left( (Z_1-c_2V_1)\cdot (Z_1-c_2V_1)\cdot S_1\right)
=(Z_1^2\cdot S_1)-2c_2(Z_1\cdot V_1\cdot S_1)+c_2^2(V_1^2\cdot S_1)=b_1-2c_2,
\]
\[
\vdots
\]
\[
(Z_{p_2}^2\cdot S_1)=b_1-2c_{p_2},
\]
\[
(T_1^2\cdot S_1)=\left( T_1\cdot (Z_1+U_1-(b_1+1)V_1)\cdot S_1\right)=-(b_1+1),
\]
\[
(T_2^2\cdot S_1)=\left( (T_1+(b_1-b_2)V_1)\cdot (Z_1+U_1-(b_2+1)V_1)\cdot S_1\right)
\]
\[
=(T_1\cdot(-(b_2+1)V_1)\cdot S_1)+((b_1-b_2)V_1\cdot Z_1\cdot S_1)
+((b_1-b_2)V_1\cdot (-(b_2+1)V_1)\cdot S_1)
\]
\[
=-(b_2+1)+(b_1-b_2)=b_1-2b_2-1,
\]
\[
\vdots
\]
\[
(T_{p_3}^2\cdot S_1)=-(b_{p_3}+1)+(b_1-b_{p_3})=b_1-2b_{p_3}-1.
\]
Therefore, 
\begin{align*}
2(\ch_2(X)\cdot S_1) & =-p_1-p_4+b_1p_2
-2(c_2+\cdots+c_{p_2}) \\
& -(b_1+1)+(b_1-b_2-(b_2+1))+\cdots+
(b_1-b_{p_3}-(b_{p_3}+1)).
\end{align*}
So, we have the following:
\begin{lem}\label{p2p3}
If $\ch_2(X)$ is nef, then 
$b_1\ge 1$ and $p_2>p_3$. In particular, $p_2\ge 2$. 
\end{lem}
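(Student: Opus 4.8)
The starting point is the displayed formula for $2(\ch_2(X)\cdot S_1)$ obtained just above the statement: since $\ch_2(X)$ is assumed nef we have $(\ch_2(X)\cdot S_1)\ge 0$, hence that whole expression is nonnegative. First I would tidy it up. Writing $B:=b_2+\cdots+b_{p_3}$ and $C:=c_2+\cdots+c_{p_2}$ (with the convention that these are $0$ when $p_3=1$ or $p_2=1$), the sum of the $T$-terms simplifies to $-(b_1+1)+\sum_{i=2}^{p_3}(b_1-2b_i-1)=(p_3-2)b_1-2B-p_3$, so that
\[
2(\ch_2(X)\cdot S_1)=-p_1-p_4-p_3+b_1(p_2+p_3-2)-2B-2C.
\]
Here $p_1,p_3,p_4$ are positive integers and $b_1,B,C\ge 0$.

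For the inequality $b_1\ge 1$: if $b_1=0$, then the right-hand side collapses to $-p_1-p_4-p_3-2B-2C\le -p_1-p_4-p_3<0$, contradicting nefness. Hence $b_1\ge 1$.

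For $p_2>p_3$ I would invoke the normalization $b_1=\min\{b_1,\ldots,b_{p_3}\}$ fixed right after Theorem \ref{batpic3}, which gives $b_i\ge b_1$ for every $i$ and therefore $B\ge(p_3-1)b_1$. Substituting this bound into the displayed equality yields
\[
2(\ch_2(X)\cdot S_1)\le -p_1-p_4-p_3+b_1(p_2-p_3)-2C.
\]
If $p_2\le p_3$, then $b_1(p_2-p_3)\le 0$ (using $b_1\ge 0$), so the right-hand side is $\le -p_1-p_4-p_3<0$, again contradicting nefness. Hence $p_2>p_3$, and since $p_3\ge 1$ we conclude $p_2\ge 2$.

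The argument is essentially routine once the intersection numbers computed above are in hand; the only points that need care are keeping track of the empty-sum cases $p_2=1$ and $p_3=1$, and — crucially for the inequality $p_2>p_3$ — actually using the minimality normalization of $b_1$, since without it no comparison between $p_2$ and $p_3$ can be extracted from a single surface $S_1$.
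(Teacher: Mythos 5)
Your proof is correct and follows essentially the same route as the paper: both bound $2(\ch_2(X)\cdot S_1)$ using $c_i\ge 0$ and the minimality $b_i\ge b_1$ to obtain $2(\ch_2(X)\cdot S_1)\le -p_1-p_3-p_4+b_1(p_2-p_3)$, forcing $b_1\ge 1$ and $p_2>p_3$. Your version merely spells out the intermediate simplification and the empty-sum conventions more explicitly.
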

\begin{proof}
By the above calculation, 
\[
2(\ch_2(X)\cdot S_1)
\le -p_1-p_4+b_1p_2-(b_1+1)p_3=-p_1-p_3-p_4+b_1(p_2-p_3).
\]
Thus, $b_1$ and $p_2-p_3$ have to be positive. 
\end{proof}

Next, we treat the case where the Picard number of the torus invariant subsurface 
$S=S_\tau\subset X_{\Sigma}$ associated to a $(d-2)$-dimensional cone $\tau\in\Sigma$ 
is two. 
In this case, we can apply the following result: 
\begin{prop}[{\cite[Theorem 3.5 and Proposition 4.4]{sato2}}]\label{polyinter}
Let $X=X_{\Sigma}$ be a projective toric manifold and $S=S_{\tau}$ 
the torus invariant subsurface associated to a $(d-2)$-dimensional cone 
$\tau\in\Sigma$ in $X$ such that $\rho(S)=2$. 
In this case, $S$ is isomorphic to the Hirzebruch surface $F_\alpha$ of degree $\alpha\ge 0$. 
Let $\G(\tau)=\{x_1,\ldots,x_{d-2}\}$. We have exactly four maximal cones 
\[
\Cone(\G(\tau)\cup\{w_1,w_2\}),\ \Cone(\G(\tau)\cup\{w_2,w_3\}), 
\]
\[
\Cone(\G(\tau)\cup\{w_3,w_4\})\mbox{ and }\Cone(\G(\tau)\cup\{w_4,w_1\})
\]
which contain $\tau$ 
for $w_1,w_2,w_3,w_4\in\G(\Sigma)$ such that 
\[
w_1+w_3+a_1x_1+\cdots +a_{d-2}x_{d-2}=0\mbox{ and }
w_2+w_4-\alpha w_1+e_1x_1+\cdots +e_{d-2}x_{d-2}=0.
\]
Then, 
\[
(\ch_2(X)\cdot S)=\frac{1}{2}\left(\alpha(2+a_1^2+\cdots+a_{d-2}^2)
+2(-\alpha+a_1e_1+\cdots +a_{d-2}e_{d-2})\right).
\]
\end{prop}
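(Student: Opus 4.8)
The plan is to compute $(\ch_2(X)\cdot S)$ divisor by divisor. Writing $D_1,\dots,D_n$ for the torus invariant prime divisors of $X$, recall $2\ch_2(X)=D_1^2+\cdots+D_n^2$, so by the projection formula $2(\ch_2(X)\cdot S)=\sum_i\big((D_i|_S)^2\big)_S$, where $D_i|_S\in\Pic(S)$ is the restriction and $(\ \cdot\ )_S$ denotes the intersection form on the surface $S=S_\tau\cong F_\alpha$. First I would identify which summands are nonzero. If $x\in\G(\Sigma)\setminus\G(\tau)$ and $\{x\}\cup\G(\tau)$ spans no cone of $\Sigma$, then $D_x\cap S=\emptyset$, hence $D_x|_S=0$ in $\Pic(S)$; since the maximal cones of $\Sigma$ containing $\tau$ are exactly $\Cone(\G(\tau)\cup\{w_j,w_{j+1}\})$ for $j\in\mathbb{Z}/4\mathbb{Z}$, the only such $x$ that survive are $w_1,w_2,w_3,w_4$, and there $D_{w_j}|_S=C_j$ is the torus invariant prime divisor of $S$ corresponding to the image $\bar w_j$ of $w_j$ in $N(\tau)=N/N_\tau$. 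Thus $2(\ch_2(X)\cdot S)=\sum_{j=1}^4(C_j^2)_S+\sum_{i=1}^{d-2}\big((D_{x_i}|_S)^2\big)_S$.

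Next I would read off the intersection data of $S\cong F_\alpha$ from the two relations in the hypothesis. Projecting them to $N(\tau)$ annihilates $x_1,\dots,x_{d-2}$ and gives $\bar w_1+\bar w_3=0$ and $\bar w_2+\bar w_4=\alpha\bar w_1$; since the $\bar w_j$ are the rays of the fan of $S$ in cyclic order, the wall relations of $S$ at $\bar w_2,\bar w_1,\bar w_3,\bar w_4$ give respectively $(C_2^2)_S=0$, $(C_1^2)_S=-\alpha$, $(C_3^2)_S=\alpha$, $(C_4^2)_S=0$ (reconfirming $S\cong F_\alpha$), together with $(C_j\cdot C_{j+1})_S=1$ and $(C_1\cdot C_3)_S=(C_2\cdot C_4)_S=0$. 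In particular $\sum_{j=1}^4(C_j^2)_S=0$, so the torus invariant curves contribute nothing to the sum.

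The crux is the remaining step: restricting $D_{x_i}$ for $x_i\in\G(\tau)$, a divisor which contains $S$. Because $\Sigma$ is smooth, $\{x_1,\dots,x_{d-2},w_1,w_2\}$ is a $\mathbb{Z}$-basis of $N$, so there is $m\in M$ with $\langle m,x_j\rangle=-\delta_{ij}$ and $\langle m,w_1\rangle=\langle m,w_2\rangle=0$. Restricting the principal divisor $\mathrm{div}(\chi^m)=\sum_{x\in\G(\Sigma)}\langle m,x\rangle D_x$ to $S$ and using the vanishing from the first step gives $D_{x_i}|_S=\langle m,w_3\rangle C_3+\langle m,w_4\rangle C_4$ in $\Pic(S)$. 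Pairing $m$ with the two relations of the hypothesis now forces $\langle m,w_3\rangle=a_i$ and $\langle m,w_4\rangle=e_i$, so $D_{x_i}|_S=a_iC_3+e_iC_4$ and hence $\big((D_{x_i}|_S)^2\big)_S=\alpha a_i^2+2a_ie_i$ by the intersection data above. Summing over $i$ gives $2(\ch_2(X)\cdot S)=\sum_{i=1}^{d-2}(\alpha a_i^2+2a_ie_i)$, which equals $\alpha(2+a_1^2+\cdots+a_{d-2}^2)+2(-\alpha+a_1e_1+\cdots+a_{d-2}e_{d-2})$ since the extra $2\alpha$ and $-2\alpha$ cancel; dividing by $2$ yields the stated formula. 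The main difficulty is precisely this restriction computation: one must use that torus invariant divisors ``far from'' $\tau$ restrict to zero on $S$ and choose the normalization $\langle m,w_1\rangle=\langle m,w_2\rangle=0$, after which the coefficients $a_i,e_i$ fall out of the given relations automatically; everything else is bookkeeping in the Picard group of $F_\alpha$.
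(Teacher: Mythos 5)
Your argument is correct, and all the steps check out: the divisors $D_x$ with $x\notin\G(\tau)\cup\{w_1,\ldots,w_4\}$ indeed restrict trivially to $S$ because no cone of $\Sigma$ contains both $\tau$ and such an $x$; the projected relations $\bar w_1+\bar w_3=0$ and $\bar w_2+\bar w_4=\alpha\bar w_1$ give exactly the intersection matrix of $F_\alpha$ you state, so the four curves $C_j$ contribute $-\alpha+0+\alpha+0=0$; and the character trick with $\langle m,x_j\rangle=-\delta_{ij}$, $\langle m,w_1\rangle=\langle m,w_2\rangle=0$ (legitimate since $\G(\tau)\cup\{w_1,w_2\}$ is a basis by smoothness) correctly yields $D_{x_i}|_S=a_iC_3+e_iC_4$, hence $\big((D_{x_i}|_S)^2\big)_S=\alpha a_i^2+2a_ie_i$, and the total $\alpha\sum_i a_i^2+2\sum_i a_ie_i$ agrees with the stated expression after the $\pm 2\alpha$ cancellation. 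The one structural difference is that the paper does not prove this proposition at all: it is quoted from the first author's earlier paper (Theorem 3.5 and Proposition 4.4 of [S1], where the numerical class of a torus-invariant surface and the resulting $\ch_2$-degree formula are worked out in general). So your write-up is a genuinely self-contained alternative: a short direct computation in $\Pic(F_\alpha)$ via restriction of the toric divisors, using only the generalized Euler-sequence expression $2\ch_2(X)=\sum_i D_i^2$ already recorded in Section \ref{junbi}. What this buys is independence from the external reference and a transparent identification of where each term of the formula comes from (the curves $C_j$ contribute nothing, all of $(\ch_2(X)\cdot S)$ comes from the divisors through $\tau$); what the citation buys the paper is generality and brevity, since [S1] treats the numerical class of $S$ systematically rather than only the quantity needed here.
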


We calculate the intersection number $(\ch_2(X)\cdot S)$ for two cases: 
(1) 
$\G(\tau)=\G(\Sigma)\setminus\{v_1,z_1,z_2,t_1,t_2\}$ and 
(2) 
$\G(\tau)=\G(\Sigma)\setminus\{v_1,y_1,z_1,z_2,u_1\}$. 

\begin{enumerate}
\item \underline{$\G(\tau)=\G(\Sigma)\setminus\{v_1,z_1,z_2,t_1,t_2\}$}.
We have two linear relations
\[
z_1+z_2+z_3+\cdots+z_{p_2}+y_1+\cdots+y_{p_1}-(u_1+\cdots+u_{p_4})=0\mbox{ and}
\]
\[
t_1+t_2+t_3+\cdots+t_{p_3}+u_1+\cdots+u_{p_4}-(y_1+\cdots+y_{p_1})=0
\]
for $\Cone(\G(\tau)\cup\{t_1\})$ and $\Cone(\G(\tau)\cup\{z_1\})$, respectively. 
One can easily see that $S\cong \mathbb{P}^1\times\mathbb{P}^1$, and we obtain 
\[
2(\ch_2(X)\cdot S)=2(-p_1-p_4)<0 
\]
by Proposition \ref{polyinter}. 
So, we have the following by Lemma \ref{p2p3}:
\begin{lem}\label{p3is1}
If $\ch_2(X)$ is nef, then $p_3=1$. 
\end{lem}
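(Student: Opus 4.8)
The plan is to mimic the computation already carried out for $S_1$ and for case (1), but now applied to the subsurface $S$ corresponding to the specific $(d-2)$-dimensional cone with $\G(\tau)=\G(\Sigma)\setminus\{v_1,z_1,z_2,t_1,t_2\}$. First I would write down the two linear relations associated with the two ``missing'' directions along which one can move out of $\tau$: as indicated in the excerpt, using the primitive relations of Theorem~\ref{batpic3} one gets
\[
z_1+z_2+(z_3+\cdots+z_{p_2})+(y_1+\cdots+y_{p_1})-(u_1+\cdots+u_{p_4})=0
\]
for $\Cone(\G(\tau)\cup\{t_1\})$ and
\[
t_1+t_2+(t_3+\cdots+t_{p_3})+(u_1+\cdots+u_{p_4})-(y_1+\cdots+y_{p_1})=0
\]
for $\Cone(\G(\tau)\cup\{z_1\})$. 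Here I should double-check, using the relations already derived among the $V_i,Y_i,Z_i,T_i,U_i$ in $\Pic(X)$ (in particular $Z_i=Z_1-c_iV_1$ and $T_i=T_1+(b_1-b_i)V_1$), that after restricting to $S$ the coefficients of the $x_j$-directions in $\G(\tau)$ vanish, so that these two relations really are the relations $w_1+w_3+\sum a_jx_j=0$ and $w_2+w_4-\alpha w_1+\sum e_jx_j=0$ of Proposition~\ref{polyinter} with $\alpha=0$, i.e.\ $S\cong\mathbb P^1\times\mathbb P^1$.

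Granting that, I would apply the intersection formula of Proposition~\ref{polyinter}. With $\alpha=0$ it collapses to $(\ch_2(X)\cdot S)=\sum_j a_je_j$, where the $a_j$ are the coefficients in the first relation and the $e_j$ those in the second, indexed over $\G(\tau)$. The directions appearing with nonzero coefficient in both relations are exactly the $y_i$'s (coefficient $+1$ in the first, $-1$ in the second, contributing $-1$ each, $p_1$ of them) and the $u_i$'s (coefficient $-1$ in the first, $+1$ in the second, again $-1$ each, $p_4$ of them); the $z_i$'s for $i\ge 3$ and $t_i$'s for $i\ge 3$ appear in only one of the two relations and so contribute nothing to the product. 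Hence $2(\ch_2(X)\cdot S)=2(-p_1-p_4)<0$, since $p_1,p_4\ge 1$. This is the content of the displayed inequality in the excerpt.

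Finally I would combine this with Lemma~\ref{p2p3}. By that lemma, nef-ness of $\ch_2(X)$ forces $b_1\ge1$ and $p_2>p_3$, hence $p_2\ge2$, so the cone $\Cone(\{z_1,z_2\})$ and the generators $z_1,z_2$ used above actually exist and the construction of $S$ is legitimate. Then the strict negativity $(\ch_2(X)\cdot S)<0$ contradicts nef-ness unless\dots\ well, it is an outright contradiction for \emph{any} value of $p_3$, which at first sight looks like it proves too much. The resolution, and the one subtle point I expect to have to be careful about, is that the construction of this particular $S$ presupposes $p_3\ge2$ (we removed $t_1$ and $t_2$); when $p_3=1$ there are not two distinct $t$-generators to remove and this subsurface is simply not available. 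So the correct reading is: if $p_3\ge2$ then such an $S$ exists with $(\ch_2(X)\cdot S)<0$, contradicting nef-ness; therefore nef-ness of $\ch_2(X)$ forces $p_3\le1$, and since $p_3$ is a positive integer, $p_3=1$. The main obstacle is thus not the arithmetic but making sure the degenerate small-$p_i$ cases are handled honestly---verifying that $p_2\ge2$ (so $z_2$ exists, via Lemma~\ref{p2p3}) and recognizing that the argument is vacuous exactly when $p_3=1$, which is precisely the desired conclusion.
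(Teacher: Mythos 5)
Your proposal is correct and follows essentially the same route as the paper: it uses the same subsurface $S$ with $\G(\tau)=\G(\Sigma)\setminus\{v_1,z_1,z_2,t_1,t_2\}$ (which exists only when $p_2\ge 2$ and $p_3\ge 2$), the same two linear relations, Proposition~\ref{polyinter} with $\alpha=0$ to get $2(\ch_2(X)\cdot S)=-2(p_1+p_4)<0$, and Lemma~\ref{p2p3} to guarantee $p_2\ge 2$, forcing $p_3=1$. Your explicit remark that the argument is vacuous precisely when $p_3=1$ is exactly the (implicit) logic of the paper's proof.
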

\begin{proof}
If $p_2\ge 2$ and $p_3\ge 2$, then $(\ch_2(X)\cdot S)<0$. 
Lemma \ref{p2p3} says that $p_2\ge 2$. So, we have $p_3=1$. 
\end{proof}
Therefore, we may assume $p_3=1$ in the following calculation.

\item 
\underline{$\G(\tau)=\G(\Sigma)\setminus\{v_1,y_1,z_1,z_2,u_1\}$.} 
First, we remark that this subsurface always exists by Lemma \ref{p2p3}.  
We have two linear relations
\[
z_1+z_2+\cdots+z_{p_2}+t_1=0\mbox{ and}
\]
\[
v_1+y_1+v_2+\cdots+v_{p_0}+y_{2}+\cdots+y_{p_1}-(c_2z_2+\cdots+c_{p_2}z_{p_2}+(b_1+1)t_1)=0
\]
for $\Cone(\G(\tau)\cup\{v_1\})$ and $\Cone(\G(\tau)\cup\{z_2\})$, respectively. 
One can easily see that $S\cong F_{c_2}$, and we obtain 
\[
2(\ch_2(X)\cdot S)=c_2(p_2+1)+2(-(c_2+\cdots+c_{p_2}+b_1+1))
\]
by Proposition \ref{polyinter}. 
If $p_2\ge 3$, we have 
\[
2(\ch_2(X)\cdot S)=c_2+(c_2-c_2)+\cdots+(c_2-c_{p_2})+c_2-(c_2+\cdots+c_{p_2})-2(b_1+1)<0.
\]
Thus, we obtain the following lemma (see Lemma \ref{p2p3}):
\begin{lem}\label{p2p22}
If $\ch_2(X)$ is nef, then $p_2=2$ and $c_2-2(b_1+1)\ge 0$. 
\end{lem}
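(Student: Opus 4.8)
The plan is to read off both conclusions directly from the intersection number
\[
2(\ch_2(X)\cdot S)=c_2(p_2+1)-2\bigl(c_2+\cdots+c_{p_2}+b_1+1\bigr)
\]
already computed for the surface $S\cong F_{c_2}$ in case (2), together with the normalization $c_2=\min\{c_2,\ldots,c_{p_2}\}$ and the constraints from Lemma \ref{p2p3}. So I would assume $\ch_2(X)$ is nef; then Lemma \ref{p2p3} gives $b_1\ge 1$ and $p_2\ge 2$.

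The first step is to rule out $p_2\ge 3$. Pulling the term $-2c_2$ out of the sum, the formula becomes
\[
2(\ch_2(X)\cdot S)=(p_2-1)c_2-2\bigl(c_3+\cdots+c_{p_2}\bigr)-2(b_1+1),
\]
and since $c_j\ge c_2$ for every $j$ and the sum $c_3+\cdots+c_{p_2}$ has $p_2-2\ge 1$ terms when $p_2\ge 3$, we may bound $c_3+\cdots+c_{p_2}\ge (p_2-2)c_2$, which yields
\[
2(\ch_2(X)\cdot S)\le (3-p_2)c_2-2(b_1+1)\le -2(b_1+1)<0,
\]
using $c_2\ge 0$ and $b_1\ge 1$. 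This contradicts nefness, so $p_2\le 2$; combined with $p_2\ge 2$ this forces $p_2=2$.

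It then remains to substitute $p_2=2$ into the formula, which collapses it to $2(\ch_2(X)\cdot S)=3c_2-2(c_2+b_1+1)=c_2-2(b_1+1)$, so nefness of $\ch_2(X)$ forces $(\ch_2(X)\cdot S)\ge 0$, i.e.\ $c_2-2(b_1+1)\ge 0$, which is the remaining assertion. I expect the only point needing care to be the strictness of the inequality in the case $p_2\ge 3$: the $c_j$ are only assumed nonnegative and could all vanish, so the negativity cannot be carried by the $c$-terms and genuinely relies on $b_1\ge 1$, which is exactly what Lemma \ref{p2p3} supplies under the nefness hypothesis. Everything else is elementary manipulation of the displayed formula, and no geometric input beyond Proposition \ref{polyinter} (already used to produce it) and Lemma \ref{p2p3} is needed.
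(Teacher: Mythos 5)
Your proposal is correct and follows essentially the same route as the paper: bound the displayed intersection number for the $F_{c_2}$ surface using $c_2=\min\{c_2,\ldots,c_{p_2}\}$ to force $2(\ch_2(X)\cdot S)<0$ whenever $p_2\ge 3$, then combine with $p_2\ge 2$ from Lemma \ref{p2p3} and substitute $p_2=2$ to read off $c_2-2(b_1+1)\ge 0$. The only cosmetic remark is that the strict negativity for $p_2\ge 3$ needs only $b_1+1>0$ (i.e.\ $b_1\ge 0$, which holds by Theorem \ref{batpic3}), so invoking $b_1\ge 1$ is slightly more than necessary, but harmless.
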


\end{enumerate}

Thus, we conclude the following:

\begin{thm}\label{mainthm}
Let $X$ be a projective toric manifold. If $\rho(X)=3$ and 
$X$ has no Fano contraction, 
then $\ch_2(X)$ is {\em not} nef.
\end{thm}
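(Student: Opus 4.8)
The plan is a proof by contradiction. Suppose $\ch_2(X)$ is nef. Since $\rho(X)=3$ and $X$ has no Fano contraction, Theorem \ref{batpic3} applies, so $\Sigma$ has the explicit form recorded there, and all of the structural restrictions established in this section are in force.

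First I would assemble the consequences of the three lemmas already proved. Lemma \ref{p2p3} gives $b_1\ge 1$ and $p_2\ge 2$; Lemma \ref{p3is1} forces $p_3=1$; and Lemma \ref{p2p22} then forces $p_2=2$ together with $c_2\ge 2(b_1+1)$, hence in particular $c_2\ge 4$. Note that with $p_2=2$ and $p_3=1$ the cone $\tau$ with $\G(\tau)=\G(\Sigma)\setminus\{v_1,y_1,z_1,t_1,u_1\}$ still makes sense, since each of the five blocks of generators is nonempty; thus the surface $S_1$ considered at the beginning of the section exists.

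Finally, substitute $p_2=2$ and $p_3=1$ into the formula for $2(\ch_2(X)\cdot S_1)$ computed there. The contribution of the $T_i^2$-terms collapses to the single term $-(b_1+1)$, and the formula becomes
\[
2(\ch_2(X)\cdot S_1)=-p_1-p_4+b_1-2c_2-1.
\]
Using $c_2\ge 2(b_1+1)$ we obtain
\[
2(\ch_2(X)\cdot S_1)\le -p_1-p_4-3b_1-5<0,
\]
since $p_1,p_4\ge 1$ and $b_1\ge 1$. This contradicts the nefness of $\ch_2(X)$, and the theorem follows.

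As for where the difficulty lies: essentially all of the work has already been done in Lemmas \ref{p2p3}, \ref{p3is1} and \ref{p2p22}, which whittle the Batyrev family of Picard-number-three toric manifolds without a Fano contraction down to a one-parameter sliver on which a single, already-computed intersection number is visibly negative. The only remaining point requiring care is bookkeeping: verifying that the surface $S_1$ is still available under the constraints $p_2=2$, $p_3=1$, and that the general intersection-number formula for $(\ch_2(X)\cdot S_1)$ specializes as claimed.
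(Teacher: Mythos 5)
Your proposal is correct and follows essentially the same route as the paper: it combines Lemmas \ref{p2p3}, \ref{p3is1} and \ref{p2p22}, specializes the already-computed formula for $2(\ch_2(X)\cdot S_1)$ to $p_2=2$, $p_3=1$, and derives a contradiction with $c_2\ge 2(b_1+1)$. The only cosmetic difference is that you substitute the inequality directly to get the explicit bound $2(\ch_2(X)\cdot S_1)\le -p_1-p_4-3b_1-5<0$, whereas the paper phrases the same contradiction as $b_1\ge 2c_2+p_1+p_4+1$ versus $c_2\ge 2(b_1+1)$.
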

\begin{proof}
Suppose that $\ch_2(X)$ is nef. Then, we may assume that $p_2=2$, $p_3=1$, $b_1\ge 1$ 
and $c_2-2(b_1+1)\ge 0$ by Lemmas \ref{p2p3}, \ref{p3is1} and \ref{p2p22}.
However, the last inequality 
obviously contradicts the another 
inequality  
\[
2(\ch_2(X)\cdot S_1)=-p_1-p_4+2b_1-2c_2-(b_1+1)\ge 0
\]
\[
\Longleftrightarrow\ b_1\ge2c_2+p_1+p_4+1.
\]
Thus, $\ch_2(X)$ is not nef.
\end{proof}

By combining Theorems \ref{extremalsato} and \ref{mainthm}, we have the following:

\begin{cor}\label{maincor}
If $X=X_\Sigma$ is a $\ch_2$-positive smooth projective toric $d$-fold 
and $\rho(X)\le 3$, then 
$X\cong\mathbb{P}^d$. 
\end{cor}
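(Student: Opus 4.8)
The plan is to dispose of the corollary by a short case analysis on the Picard number $\rho(X)$, which is at most three by hypothesis, invoking the two theorems already at our disposal.

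I would first treat $\rho(X)=1$ separately, since it is covered by neither of the cited theorems: a smooth complete toric $d$-fold of Picard number one has a fan with exactly $d+1$ rays, and smoothness together with completeness forces this fan to be the one defining $\mathbb{P}^d$, so $X\cong\mathbb{P}^d$ automatically, irrespective of any positivity assumption. For $\rho(X)=2$ I would simply quote Theorem \ref{extremalsato}, whose last assertion is that no toric manifold of Picard number two is $\ch_2$-positive; hence this case is vacuous under our hypothesis.

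The remaining case is $\rho(X)=3$, and here I would split according to whether $X$ admits a Fano contraction. If it does, then since $X$ is assumed $\ch_2$-positive, Theorem \ref{extremalsato} gives $X\cong\mathbb{P}^d$ at once. If it does not, then Theorem \ref{mainthm} shows that $\ch_2(X)$ is not nef; since positivity implies nefness, this contradicts the hypothesis, so this subcase cannot occur. Combining the three cases yields $X\cong\mathbb{P}^d$ in every instance.

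I do not anticipate any genuine obstacle: the corollary is a formal consequence of Theorems \ref{extremalsato} and \ref{mainthm}. The only points to keep in mind are that the Picard-number-one case must be handled via the classical classification rather than through the two theorems, and that one should explicitly record the implication ``$\ch_2$-positive $\Rightarrow$ $\ch_2$-nef'' so that Theorem \ref{mainthm} can be applied in the $\rho(X)=3$, no-Fano-contraction subcase.
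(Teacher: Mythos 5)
Your proposal is correct and matches the paper's argument: the paper proves Corollary \ref{maincor} precisely by combining Theorems \ref{extremalsato} and \ref{mainthm}, exactly as in your case analysis. Your explicit handling of $\rho(X)=1$ via the classical fact that a smooth complete toric $d$-fold of Picard number one is $\mathbb{P}^d$, and your note that positivity implies nefness, simply spell out details the paper leaves implicit.
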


\section{Positivities of higher Chern characters}\label{highercase} 

In this section we study toric manifolds $X$ with $\mathrm{ch}_k(X)$ nef for $k \geq 3$.
In the case $d=k$, it is not difficult to construct
toric $d$-folds $X$ with $\mathrm{ch}_d(X)$ positive.

\begin{ex}
Let $d \geq 3$ and $a \geq 1$.
Let $X=X_\Sigma$ be a toric manifold of $\rho(X)=2$
such that $\G(\Sigma)=\{x_1, x_2, x_3, y_1, \ldots, y_{d-1}\}$
and the primitive relations of $\Sigma$ are
\begin{equation*}
x_1+x_2+x_3=ay_1 \mbox{ and } y_1+\cdots+y_{d-1}=0.
\end{equation*}
Then $X$ is isomorphic to the $\mathbb{P}^{d-2}$-bundle
\[
\mathbb{P}_{\mathbb{P}^2}\left(\mathcal{O}(a) \oplus \mathcal{O}^{\oplus d-2}\right)
\]
over $\mathbb{P}^2$.
We denote by
$D_1, D_2, D_3, E_1, \ldots, E_{d-1}$
the torus invariant divisors in $\Pic(X)$
associated to $x_1,x_2,x_3,y_1,\ldots,y_{d-1}$, respectively.
Then we have the relations
\begin{equation*}
D_1=D_2=D_3 \mbox{ and } aD_3+E_1=E_2=\cdots=E_{d-1}
\end{equation*}
in $\Pic(X)$.
We can calculate the intersection numbers as follows:
\begin{align*}
D_1^d&=D_2^d=D_3^d=0,\\
E_1^d&=E_1^2(E_2-aD_3)^{d-2}\\
&=E_1^2\left(E_2^{d-2}-a(d-2)D_3 \cdot E_2^{d-3}
+a^2\frac{(d-2)(d-3)}{2}D_3^2 \cdot E_2^{d-4}\right)\\
&=E_1^2 \cdot E_2 \cdots E_{d-1}
-a(d-2) D_3 \cdot E_1^2 \cdot E_2 \cdots E_{d-2} \\
&+a^2\frac{(d-2)(d-3)}{2} D_2 \cdot D_3 \cdot E_1^2 \cdot E_2 \cdots E_{d-3}\\
&=0-a(d-2)\cdot(-a)+a^2\frac{(d-2)(d-3)}{2}\\
&=a^2(d-2)+a^2\frac{(d-2)(d-3)}{2}>0,\\
E_2^d&=\cdots=E_{d-1}^d=(aD_3+E_1) \cdot E_2 \cdot E_3 \cdots E_{d-1}^2
=aD_3 \cdot E_2 \cdot E_3 \cdots E_{d-1}^2=a \cdot a=a^2>0.
\end{align*}
Thus, $\mathrm{ch}_d(X)$ is positive.
Note that $X$ is Fano if $a \leq 2$
(this follows from Theorem \ref{Fano}).
\end{ex}

Next we consider the case $d>k$.
For odd $k$, there is a toric $d$-fold $X$ with $\mathrm{ch}_k(X)$ positive.
However, our example does not provide
an example of a toric manifold with $\mathrm{ch}_k(X)$ positive for even $k$.

\begin{prop}\label{p1bdle}
Let $d>k \geq 3$ and $a \geq 1$ with $d-a^k \geq 1$.
Let $X=X_\Sigma$ be a toric manifold of $\rho(X)=2$
such that $\G(\Sigma)=\{x_1, \ldots, x_d, y_1, y_2\}$
and the primitive relations of $\Sigma$ are
\begin{equation*}
x_1+\cdots+x_d=ay_1 \mbox{ and } y_1+y_2=0.
\end{equation*}
Then the following hold:
\begin{enumerate}
\item If $k$ is odd, then $\mathrm{ch}_k(X)$ is positive.
\item If $k$ is even, then $\mathrm{ch}_k(X)$ is nef, but not positive.
\end{enumerate}
\end{prop}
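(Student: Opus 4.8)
The plan is to realize $X$ explicitly as the projective bundle $\mathbb{P}_{\mathbb{P}^{d-1}}(\mathcal{O}(a)\oplus\mathcal{O})$ over $\mathbb{P}^{d-1}$, so that the Chern character computation reduces to intersection theory on a well-understood variety. Writing $D_1,\ldots,D_d,E_1,E_2$ for the torus invariant divisors associated to $x_1,\ldots,x_d,y_1,y_2$, the primitive relations give $D_1=\cdots=D_d$ and $aD_d+E_1=E_2$ in $\Pic(X)$, together with the vanishings $D_1\cdots D_d=0$ and $E_1E_2=0$ coming from the two primitive collections. Set $D:=D_1$ and $E:=E_1$; then $\Pic(X)$ is freely generated by $D$ and $E$, the relations are $D^d=0$ and $E(E+aD)=0$, and $D^{d-1}\cdot E = 1$. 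The first step is therefore to record these relations and to compute $D^i E^{d-i}$ for all $i$: from $E^2=-aDE$ one gets $E^j=(-a)^{j-1}D^{j-1}E^{}$ for $j\ge 1$, hence $D^iE^{d-i}=(-a)^{d-i-1}D^{d-1}E=(-a)^{d-i-1}$ for $0\le i\le d-1$, while $D^dE^0=0$.

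Next I would expand $2\,\ch_k(X)\cdot(\text{appropriate }k\text{-cycle})$ — or rather directly compute the top self-intersections appearing in $\ch_k(X)=\frac{1}{k!}(D_1^k+\cdots+D_d^k+E_1^k+E_2^k)$ against the torus invariant $k$-dimensional subvarieties. By Proposition \ref{torusinv} and Definition \ref{positivedef} it suffices to test against torus invariant $k$-folds $Y=\V(\tau)$ for $(d-k)$-dimensional cones $\tau$. Each such $Y$ is, up to the relations above, a monomial of degree $d-k$ in the $D_i$'s and $E_j$'s; since $D_i=D$ and $E_1E_2=0$, every such $Y$ is numerically either $D^{d-k}$, or $D^{d-k-1}E$ (when $\tau$ uses exactly one of $y_1,y_2$ and $d-k-1$ of the $x_i$), with the constraint $d-a^k\ge 1$ guaranteeing all the relevant cones actually lie in $\Sigma$. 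So the computation splits into evaluating $\ch_k(X)\cdot D^{d-k}$ and $\ch_k(X)\cdot D^{d-k-1}E$. Using $D_i^k=D^k$, $D^d=0$, and the formulas for $E_1^k,E_2^k$ in terms of $D,E$, one finds $k!\,(\ch_k(X)\cdot D^{d-k})$ equals a sum that evaluates to something like $a^k+((-a)\cdot(\text{correction from }E_2^k))$, and similarly for the $D^{d-k-1}E$ cycle. The parity of $k$ enters precisely through the sign $(-a)^{k-1}$ appearing when reducing $E_1^k$ and $E_2^k$: for odd $k$ the two contributions $E_1^k$ and $E_2^k=(aD+E)^k$ reinforce and give a strictly positive answer on every test cycle, whereas for even $k$ one of the test cycles — the one corresponding to the section $E_1=0$, i.e.\ $Y$ a monomial forcing $E_1$ to appear — yields exactly $0$, giving nef but not positive.

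Concretely, the key steps in order are: (i) identify $X$ with the $\mathbb{P}^1$-bundle and record $D_i=D$, $E_2=E+aD$, $D^d=0$, $E_1E_2=0$, $D^{d-1}E=1$; (ii) derive $D^iE^{d-i}=(-a)^{d-1-i}$ for $0\le i\le d-1$; (iii) compute $E_1^k$ and $E_2^k$ as explicit polynomials in $D,E$ via the binomial theorem and the relation $E^2=-aDE$, obtaining $E_1^k=(-a)^{k-1}D^{k-1}E$ and $E_2^k=\sum_{j}\binom{k}{j}a^{k-j}D^{k-j}E^{j}=D^{k-1}E\cdot\bigl(a\sum_{j\ge1}\binom{k}{j}(-1)^{j-1}a^{k-j}\bigr)+a^kD^k$, and note $a^kD^k=0$ when $k<d$ only after multiplying by a further power of $D$, so care is needed here; (iv) evaluate $\ch_k(X)\cdot Y$ for the finitely many numerical classes of torus invariant $k$-folds $Y$, namely $D^{d-k}$ and $D^{d-k-1}E$, and check positivity versus vanishing according to the parity of $k$; (v) conclude. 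The main obstacle will be step (iv): correctly enumerating which torus invariant $(d-k)$-dimensional cones exist in $\Sigma$ (this is where $d-a^k\ge 1$ is used) and verifying that the class $D^{d-k-1}E$ is genuinely represented by an effective torus invariant $k$-fold, together with bookkeeping the binomial sum $\sum_{j\ge 1}\binom{k}{j}(-1)^{j-1}a^{k-j}$ and showing it combines with the $E_1^k$ term to give $>0$ for odd $k$ and a term that vanishes on the critical cycle for even $k$ — essentially an elementary but slightly delicate sign analysis.
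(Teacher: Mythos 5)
Your approach coincides with the paper's: identify $X$ with $\mathbb{P}_{\mathbb{P}^{d-1}}(\mathcal{O}(a)\oplus\mathcal{O})$, use the relations $D_1=\cdots=D_d$, $E_2=E_1+aD_d$, $D^d=0$, $E_1E_2=0$, and test $\ch_k(X)$ against the two torus invariant $k$-cycles of classes $D^{d-k}$ and $D^{d-k-1}E_1$ (the paper's $V_1$, $V_2$, associated to the cones generated by $x_1,\dots,x_{d-k}$ and by $x_1,\dots,x_{d-k-1},y_1$). Your intersection formulas $E_1^k=(-a)^{k-1}D^{k-1}E_1$, $E_2^k=(aD)^{k-1}E_2$, $D^iE_1^{d-i}=(-a)^{d-1-i}$ are exactly what the paper uses, and the reduction to these two test classes is legitimate (any other torus invariant $k$-cycle, e.g.\ of class $D^{d-k-1}E_2=D^{d-k-1}E_1+aD^{d-k}$, is a nonnegative combination of them).

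However, two concrete predictions in your sketch are wrong and would need to be fixed when you carry out step (iv). First, the hypothesis $d-a^k\ge 1$ has nothing to do with which $(d-k)$-dimensional cones exist in $\Sigma$ (they all exist for any $d>k$); it enters only at the very end, because on the cycle $D^{d-k-1}E_1$ one gets $k!\,(\ch_k(X)\cdot V_2)=d\cdot 1+(-a)^k+0=d-a^k$ for odd $k$, and the hypothesis is what makes this positive. Second, you have the parity analysis backwards: the cycle on which $\ch_k(X)$ vanishes for even $k$ is the \emph{fiber-type} cycle $D^{d-k}$, where $k!\,(\ch_k(X)\cdot V_1)=(-a)^{k-1}+a^{k-1}$, which is $2a^{k-1}>0$ for odd $k$ and $0$ for even $k$; on the cycle containing $E_1$ the value is $d-a^k>0$ (odd $k$) or $d+a^k>0$ (even $k$), never zero. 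So the ``delicate sign analysis'' you defer is in fact a short direct computation, but as stated your sketch attributes the vanishing to the wrong cycle and the hypothesis to the wrong step; with these corrections your argument is the paper's proof.
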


\begin{rem}
In Proposition \ref{p1bdle},
$X$ is isomorphic to the $\mathbb{P}^1$-bundle
$\mathbb{P}_{\mathbb{P}^{d-1}}(\mathcal{O}(a) \oplus \mathcal{O})$
over $\mathbb{P}^{d-1}$, and it is Fano by Theorem \ref{Fano}.
\end{rem}

\begin{proof}[Proof of Proposition \ref{p1bdle}]
We denote by
$D_1, \ldots, D_d, E_1, E_2$
the torus invariant divisors in $\Pic(X)$
associated to $x_1, \ldots, x_d, y_1, y_2$, respectively.
Then we have the relations
\begin{equation*}
D_1=\cdots=D_d \mbox{ and } aD_d+E_1=E_2
\end{equation*}
in $\Pic(X)$.
Put $\tau_1$ (resp.\ $\tau_2$) as the $(d-k)$-dimensional cone in $\Sigma$
whose generators are $x_1, \dots, x_{d-k}$ (resp.\ $x_1, \dots, x_{d-k-1}, y_1$).
We denote by $V_i$ the $k$-dimensional torus invariant closed subvariety
of $X$ associated to $\tau_i$ for $i=1, 2$.
It follows from Proposition \ref{torusinv} and the relations in $\Pic(X)$
that $\mathrm{ch}_k(X)$ is positive (resp.\ nef)
if and only if $(\mathrm{ch}_k(X) \cdot V_i)$ is positive (resp. non-negative)
for each $i=1, 2$.
Since
\begin{align*}
D_1^k \cdot D_1 \cdots D_{d-k}&=D_2^k \cdot D_1 \cdots D_{d-k}
=\cdots=D_d^k \cdot D_1 \cdots D_{d-k}=0, \\
E_1^k \cdot D_1 \cdots D_{d-k}&=D_1 \cdots D_{d-k} \cdot (E_2-aD_d)^{k-2} \cdot E_1^2
=D_1 \cdots D_{d-k} \cdot (-aD_d)^{k-2} \cdot E_1^2 \\
&=(-a)^{k-2} D_1 \cdots D_{d-2} \cdot E_1^2=(-a)^{k-1}, \\
E_2^k \cdot D_1 \cdots D_{d-k}&=D_1 \cdots D_{d-k} \cdot (aD_d+E_1)^{k-2} \cdot E_2^2
=D_1 \cdots D_{d-k} \cdot (aD_d)^{k-2} \cdot E_2^2 \\
&=a^{k-2} D_1 \cdots D_{d-2} \cdot E_2^2=a^{k-1},
\end{align*}
we have
\begin{equation*}
(\mathrm{ch}_k(X) \cdot V_1)=\left\{\begin{array}{ll}
2a^{k-1} & (k \mbox{ is odd}), \\
0 & (k \mbox{ is even}). \end{array}\right.
\end{equation*}
Since
\begin{align*}
D_1^k \cdot D_1 \cdots D_{d-k-1} \cdot E_1&=D_2^k \cdot D_1 \cdots D_{d-k-1} \cdot E_1
=\cdots=D_d^k \cdot D_1 \cdots D_{d-k-1} \cdot E_1=1, \\
E_1^k \cdot D_1 \cdots D_{d-k-1} \cdot E_1
&=D_1 \cdots D_{d-k-1} \cdot (E_2-aD_d)^{k-1} \cdot E_1^2 \\
&=(-a)^{k-1} D_1 \cdots D_{d-2} \cdot E_1^2=(-a)^k, \\
E_2^k \cdot D_1 \cdots D_{d-k-1} \cdot E_1&=0,
\end{align*}
we have
\begin{equation*}
(\mathrm{ch}_k(X) \cdot V_2)=\left\{\begin{array}{ll}
d-a^k & (k \mbox{ is odd}), \\
d+a^k & (k \mbox{ is even}). \end{array}\right.
\end{equation*}
Thus, $\mathrm{ch}_k(X)$ is nef for any $k \geq 3$,
and it is positive if and only if $k$ is odd.
\end{proof}

At present, the only known examples of toric $d$-folds
with $\mathrm{ch}_4(X)$ positive for $d \geq 5$ are projective spaces.
Finally, we prove the following theorem:

\begin{thm}
Any smooth complete toric $d$-fold $X$ of $\rho(X)=2$ with $d \geq 5$
is not $\mathrm{ch}_4$-positive.
\end{thm}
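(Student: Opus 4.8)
\smallskip

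The plan is to reduce everything to the Kleinschmidt classification of smooth complete toric varieties of Picard number two (equivalently, the $\rho=2$ case of Batyrev's primitive relations). Such an $X=X_\Sigma$ has $\G(\Sigma)=\{x_1,\dots,x_p,y_1,\dots,y_q\}$ with $p,q\ge 2$, $p+q=d+2$, and primitive relations $x_1+\cdots+x_p=a_1y_1+\cdots+a_qy_q$ and $y_1+\cdots+y_q=0$ for some $a_1,\dots,a_q\in\mathbb{Z}_{\ge 0}$; since $\{y_1,\dots,y_q\}$ is a primitive collection, $\{y_j:a_j>0\}$ is a proper subset of it, so after reordering $a_q=0$. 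Computing the rational functions for the dual basis of $\G(\Sigma)\setminus\{x_1,y_q\}$ gives, in $A^*(X)$, the relations $D_{x_1}=\cdots=D_{x_p}=:\xi$ and $D_{y_j}=\eta-a_j\xi$ with $\eta:=D_{y_q}$, together with $\xi^p=0$, $\prod_{j=1}^q(\eta-a_j\xi)=0$ and $\xi^{p-1}\eta^{q-1}=1$. Hence $24\,\ch_4(X)=p\,\xi^4+\sum_{j=1}^q(\eta-a_j\xi)^4$, and by Proposition \ref{torusinv} it suffices to exhibit one $(d-4)$-dimensional cone $\sigma\in\Sigma$ with $(\ch_4(X)\cdot\V(\sigma))\le 0$.

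For any such $\sigma$ the normal bundle of $\V(\sigma)$ in $X$ is $\bigoplus_{z\in\G(\sigma)}\mathcal{O}_X(D_z)|_{\V(\sigma)}$, so the tangent--normal sequence gives $(\ch_4(X)\cdot\V(\sigma))=\ch_4(\V(\sigma))+\frac{1}{24}\sum_{z\in\G(\sigma)}(D_z|_{\V(\sigma)})^4$, where $\ch_4(\V(\sigma))$ is the fourth Chern number of the toric $4$-fold $\V(\sigma)$ and the remaining summands are intersection numbers on it. I will choose $\sigma$ so that $\V(\sigma)$ is a $\mathbb{P}^1$-bundle over $\mathbb{P}^3$ or a $\mathbb{P}^3$-bundle over $\mathbb{P}^1$; a short computation of the sort carried out in Section \ref{highercase} shows $\ch_4$ of any such $4$-fold is $0$, while each $D_z|_{\V(\sigma)}$ is visibly one of the generating classes $\xi$, $\eta-a_j\xi$ restricted to $\V(\sigma)$, whose fourth power is immediate from $\xi^2=0$ (resp.\ $\xi^4=0$) and the defining relation. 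Since $p+q\ge 7$, at least one of $p\ge 4$, $q\ge 4$ holds. If $q\ge 4$, order so that $a_q=0$ and $a_1\ge\cdots\ge a_{q-1}$ and take $\sigma=\Cone(x_1,\dots,x_{p-2},y_1,\dots,y_{q-4})$; then $\V(\sigma)\cong\mathbb{P}_{\mathbb{P}^1}(\mathcal{O}\oplus\mathcal{O}(a_{q-3})\oplus\mathcal{O}(a_{q-2})\oplus\mathcal{O}(a_{q-1}))$ and the above yields $(\ch_4(X)\cdot\V(\sigma))=\frac{1}{24}(qA-4S)$ with $S=a_1+\cdots+a_{q-1}$ and $A=a_{q-3}+a_{q-2}+a_{q-1}$ the sum of the three smallest of the $a_j$; since $A\le\frac{3S}{q-1}$ and $q\ge 4$, $qA-4S\le\frac{(4-q)S}{q-1}\le 0$. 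If $q\le 3$ then $p\ge 4$; take $\sigma=\Cone(x_1,\dots,x_{p-4},y_1,\dots,y_{q-2})$ (ordering $a_1\ge a_2$ when $q=3$), so that $\V(\sigma)$ is a $\mathbb{P}^1$-bundle over $\mathbb{P}^3$: for $q=2$ one gets $(\ch_4(X)\cdot\V(\sigma))=0$, and for $q=3$ one gets $\frac{1}{24}(a_2^3-4a_1a_2^2+6a_1^2a_2-4a_1^3)=-\frac{1}{24}a_1^3\,g(a_2/a_1)$ with $g(t)=4-6t+4t^2-t^3\ge 1$ on $[0,1]$, hence $\le 0$. In all cases $\ch_4(X)$ is not positive.

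The content of the argument is computational: one must, in each regime of $(p,q)$, locate a torus invariant $4$-fold on which $\ch_4(X)$ is visibly non-positive and then evaluate the intersection numbers on $\V(\sigma)$. The point where care is needed is recognizing that the quantities governing the sign are $qA-4S$ for the $\mathbb{P}^3$-over-$\mathbb{P}^1$ choice and the cubic $4a_1^3-6a_1^2a_2+4a_1a_2^2-a_2^3$ for the $\mathbb{P}^1$-over-$\mathbb{P}^3$ choice, and that the hypothesis $d\ge 5$ enters only through $p+q\ge 7$, which forces $p\ge 4$ or $q\ge 4$ and thereby the right inequality; the borderline case where $X$ is a product of two projective spaces, for which these numbers are $0$ rather than negative, is absorbed into the same computations.
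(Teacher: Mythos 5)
Your proposal is correct and follows essentially the same route as the paper: the same Picard-number-two classification, the very same torus invariant $4$-folds (the cone on $x_1,\ldots,x_{p-2},y_1,\ldots,y_{q-4}$ when $q\ge 4$, and on $x_1,\ldots,x_{p-4},y_1,\ldots,y_{q-2}$ when $q\le 3$), and intersection computations that are algebraically identical to the paper's, e.g.\ your $qA-4S$ is exactly the paper's $(s-4)A-4\sum_{i\le s-4}a_i$ and your cubic $a_2^3-4a_1a_2^2+6a_1^2a_2-4a_1^3$ is the paper's value of $E_1^4\cdot V$ in the $s=3$ case. The only differences are organizational: you regroup the sum $\frac{1}{24}\sum_w D_w^4\cdot\V(\sigma)$ via the normal bundle sequence, observing that the intrinsic $\mathrm{ch}_4$ of the chosen $\mathbb{P}^1$-over-$\mathbb{P}^3$ and $\mathbb{P}^3$-over-$\mathbb{P}^1$ subvarieties vanishes, and you treat $q=2$ by the same subvariety computation instead of quoting Proposition \ref{p1bdle}.
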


\begin{proof}
Any toric manifold of Picard number two
is a projective space bundle over a projective space.
Let $s-1$ be the dimension of a fiber.
If $s=2$, then $\mathrm{ch}_4(X)$ is not positive by Proposition \ref{p1bdle}.
Thus, it suffices to prove the assertion in the case $s \geq 3$.

(1) {\it The case where $s=3$}.
Let $X=X_\Sigma$ be a toric manifold of $\rho(X)=2$
such that $\G(\Sigma)=\{x_1, \ldots, x_{d-1}, y_1, y_2, y_3\}$
and the primitive relations of $\Sigma$ are
\begin{equation*}
x_1+\cdots+x_{d-1}=a_1y_1+a_2y_2 \mbox{ and } y_1+y_2+y_3=0,
\end{equation*}
where $a_1 \geq a_2 \geq 0$.
Then $X$ is the $\mathbb{P}^2$-bundle
$\mathbb{P}_{\mathbb{P}^{d-2}}(\mathcal{O}(a_1) \oplus \mathcal{O}(a_2) \oplus \mathcal{O})$
over $\mathbb{P}^{d-2}$.
We denote by
$D_1, \ldots, D_{d-1}, E_1, E_2, E_3$
the torus invariant divisors in $\Pic(X)$
associated to $x_1, \ldots, x_{d-1}, y_1, y_2, y_3$, respectively.
Then we have the relations
\begin{equation*}
D_1=\cdots=D_{d-1} \mbox{ and } a_iD_{d-1}+E_i-E_3=0
\end{equation*}
for $i=1, 2$ in $\Pic(X)$.
Put $\tau$ as the $(d-4)$-dimensional cone in $\Sigma$
whose generators are $x_1, \ldots, x_{d-5}, y_1$.
We denote by $V$ the $4$-dimensional torus invariant closed subvariety
of $X$ associated to $\tau$.
We show $(\mathrm{ch}_4(X) \cdot V) \leq 0$.
Since
$D_i^4 \cdot D_1 \cdots D_{d-5} \cdot E_1=0$ for every $i=1, \ldots, {d-1}$,
$E_2^4 \cdot D_1 \cdots D_{d-5} \cdot E_1=-a_2^3$
and $E_3^4 \cdot D_1 \cdots D_{d-5} \cdot E_1=a_2^3$,
it suffices to show $E_1^4 \cdot D_1 \cdots D_{d-5} \cdot E_1 \leq 0$.
We calculate the intersection number as follows:
\begin{align*}
&E_1^4 \cdot D_1 \cdots D_{d-5} \cdot E_1 \\
&=D_1 \cdots D_{d-5} \cdot E_1^3 \cdot (E_2-(a_1-a_2)D_{d-1}) \cdot (E_3-a_1D_{d-1}) \\
&=D_1 \cdots D_{d-5} \cdot E_1^3
\cdot (-a_1D_{d-1} \cdot E_2-(a_1-a_2)D_{d-1} \cdot E_3 +a_1(a_1-a_2)D_{d-1}^2) \\
&=D_1 \cdots D_{d-5} \cdot E_1^3
\cdot (-a_1D_{d-1} \cdot E_2-(a_1-a_2)D_{d-1} \cdot (a_2D_{d-1}+E_2)+a_1(a_1-a_2)D_{d-1}^2) \\
&=D_1 \cdots D_{d-5} \cdot E_1^3 \cdot ((a_2-2a_1)D_{d-1} \cdot E_2+(a_1-a_2)^2D_{d-1}^2) \\
&=D_1 \cdots D_{d-5} \cdot E_1^2
\cdot (E_3-a_1D_{d-1}) \cdot ((a_2-2a_1)D_{d-1} \cdot E_2+(a_1-a_2)^2D_{d-1}^2) \\
&=(a_1-a_2)^2 D_1 \cdots D_{d-3} \cdot E_1^2 \cdot E_3
-a_1(a_2-2a_1) D_1 \cdots D_{d-3} \cdot E_1^2 \cdot E_2 \\
& -a_1(a_1-a_2)^2 D_1 \cdots D_{d-2} \cdot E_1^2 \\
&=-(a_1-a_2)^3+a_1^2(a_2-2a_1)-a_1(a_1-a_2)^2 \leq 0.
\end{align*}
Thus, $(\mathrm{ch}_4(X) \cdot V) \leq 0$.

(2) {\it The case where $s \geq 4$}.
Let $X=X_\Sigma$ be a toric manifold of $\rho(X)=2$
such that $\G(\Sigma)=\{x_1, \ldots, x_{d-s+2}, y_1, \ldots, y_s\}$
and the primitive relations of $\Sigma$ are
\begin{equation*}
x_1+\cdots+x_{d-s+2}=a_1y_1+\cdots+a_{s-1}y_{s-1} \mbox{ and } y_1+\cdots+y_s=0,
\end{equation*}
where $a_1 \geq \cdots \geq a_{s-1} \geq 0$.
Then $X$ is the $\mathbb{P}^{s-1}$-bundle
$\mathbb{P}_{\mathbb{P}^{d-s+1}}
(\mathcal{O}(a_1) \oplus \cdots \oplus \mathcal{O}(a_{s-1}) \oplus \mathcal{O})$
over $\mathbb{P}^{d-s+1}$.
We denote by
$D_1, \ldots, D_{d-s+2}, E_1, \ldots, E_s$
the torus invariant divisors in $\Pic(X)$
associated to $x_1, \ldots, x_{d-s+2}, y_1, \ldots, y_s$, respectively.
Then we have the relations
\begin{equation*}
D_1=\cdots=D_{d-s+2} \mbox{ and } a_iD_{d-s+2}+E_i-E_s=0
\end{equation*}
for $i=1, \ldots, s-1$ in $\Pic(X)$.
Put $\tau$ as the $(d-4)$-dimensional cone in $\Sigma$
whose generators are $x_1, \ldots, x_{d-s}, y_1, \ldots, y_{s-4}$.
We denote by $V$ the $4$-dimensional torus invariant closed subvariety
of $X$ associated to $\tau$.
We show $(\mathrm{ch}_4(X) \cdot V) \leq 0$.
For every $i=1, \ldots, d-s+2$, we have
$D_i^4 \cdot D_1 \cdots D_{d-s} \cdot E_1 \cdots E_{s-4}=0$.
We calculate
\begin{align*}
&E_1^4 \cdot D_1 \cdots D_{d-s} \cdot E_1 \cdots E_{s-4} \\
&=D_1 \cdots D_{d-s} \cdot E_1^2 \cdot E_2 \cdots E_{s-4}
\cdot (E_{s-3}+(a_{s-3}-a_1)D_{d-s+2}) \\
&\cdot (E_{s-2}+(a_{s-2}-a_1)D_{d-s+2})
\cdot (E_{s-1}+(a_{s-1}-a_1)D_{d-s+2}) \\
&=D_1 \cdots D_{d-s} \cdot E_1^2 \cdot E_2 \cdots E_{s-4}
\cdot (E_{s-3} \cdot E_{s-2} \cdot E_{s-1}
+(a_{s-3}-a_1) D_{d-s+2} \cdot E_{s-2} \cdot E_{s-1} \\
&+(a_{s-2}-a_1) D_{d-s+2} \cdot E_{s-3} \cdot E_{s-1}
+(a_{s-1}-a_1) D_{d-s+2} \cdot E_{s-3} \cdot E_{s-2}) \\
&=-a_1+(a_{s-3}-a_1)+(a_{s-2}-a_1)+(a_{s-1}-a_1) \leq 0.
\end{align*}
Similarly,
\begin{equation*}
E_i^4 \cdot D_1 \cdots D_{d-s} \cdot E_1 \cdots E_{s-4}
=-a_i+(a_{s-3}-a_i)+(a_{s-2}-a_i)+(a_{s-1}-a_i) \leq 0
\end{equation*}
for every $i=2, \ldots, s-4$.
Hence it suffices to show
\begin{equation*}
(E_{s-3}^4+E_{s-2}^4+E_{s-1}^4+E_s^4) \cdot D_1 \cdots D_{d-s} \cdot E_1 \cdots E_{s-4}
\leq 0.
\end{equation*}
We calculate
\begin{align*}
&E_{s-3}^4 \cdot D_1 \cdots D_{d-s} \cdot E_1 \cdots E_{s-4} \\
&=D_1 \cdots D_{d-s} \cdot E_1 \cdots E_{s-4} \cdot E_{s-3}^2 \\
&\cdot (E_{s-2}+(a_{s-2}-a_{s-3})D_{d-s+2})
\cdot (E_{s-1}+(a_{s-1}-a_{s-3})D_{d-s+2}) \\
&=D_1 \cdots D_{d-s} \cdot E_1 \cdots E_{s-4} \cdot E_{s-3}^2 \\
&\cdot (E_{s-2} \cdot E_{s-1}
+(a_{s-2}-a_{s-3})D_{d-s+2} \cdot E_{s-1}
+(a_{s-1}-a_{s-3})D_{d-s+2} \cdot E_{s-2}) \\
&=-a_{s-3}+(a_{s-2}-a_{s-3})+(a_{s-1}-a_{s-3}) \\
&=(a_{s-3}+a_{s-2}+a_{s-1})-4a_{s-3}.
\end{align*}
Similarly,
\begin{align*}
&E_{s-2}^4 \cdot D_1 \cdots D_{d-s} \cdot E_1 \cdots E_{s-4}
=(a_{s-3}+a_{s-2}+a_{s-1})-4a_{s-2}, \\
&E_{s-1}^4 \cdot D_1 \cdots D_{d-s} \cdot E_1 \cdots E_{s-4}
=(a_{s-3}+a_{s-2}+a_{s-1})-4a_{s-1}.
\end{align*}
Furthermore, we calculate
\begin{align*}
&E_s^4 \cdot D_1 \cdots D_{d-s} \cdot E_1 \cdots E_{s-4} \\
&=D_1 \cdots D_{d-s} \cdot E_1 \cdots E_{s-4} \cdot E_s^2
\cdot (E_{s-2}+a_{s-2}D_{d-s+2}) \cdot (E_{s-1}+a_{s-1}D_{d-s+2}) \\
&=D_1 \cdots D_{d-s} \cdot E_1 \cdots E_{s-4} \cdot E_s^2
\cdot (E_{s-2} \cdot E_{s-1}+a_{s-2}D_{d-s+2} \cdot E_{s-1}+a_{s-1}D_{d-s+2} \cdot E_{s-2}) \\
&=a_{s-3}+a_{s-2}+a_{s-1}.
\end{align*}
Hence we have
\begin{align*}
&(E_{s-3}^4+E_{s-2}^4+E_{s-1}^4+E_s^4) \cdot D_1 \cdots D_{d-s} \cdot E_1 \cdots E_{s-4} \\
&=4(a_{s-3}+a_{s-2}+a_{s-1})-4a_{s-3}-4a_{s-2}-4a_{s-1}=0.
\end{align*}
Thus, $(\mathrm{ch}_4(X) \cdot V) \leq 0$.

In every case, $\mathrm{ch}_4(X)$ is not positive.
\end{proof}


\end{document}